\theoremstyle{plain}
\newtheorem{defn}{Definition}[section]
\newtheorem{Thm}[defn]{Theorem}
\newtheorem{Prop}[defn]{Proposition}
\newtheorem{Lem}[defn]{Lemma}
\newtheorem{Cor}[defn]{Corollary}
\theoremstyle{remark}
\newtheorem{Rem}[defn]{Remark}
\newtheorem{Ex}[defn]{Example}
\numberwithin{equation}{section}
\newcommand{\R}{\ensuremath{\mathbb{R}}}
\newcommand{\Z}{\ensuremath{\mathbb{Z}}}
\begin{document}
\title[On torsion in (bi)linearized LCH in dim 3]{On torsion in (bi)linearized Legendrian contact homology in dimension 3}
\author[F. Bourgeois]{Fr\'ed\'eric Bourgeois} \address{Universit\'e Paris-Saclay, CNRS, Laboratoire de Math\'ematiques d'Orsay, 
91405 Orsay, France} \email{frederic.bourgeois@universite-paris-saclay.fr} 
\urladdr{https://www.imo.universite-paris-saclay.fr/{\raisebox{0.5ex}{\texttildelow}}frederic.bourgeois/}
\author[S. Connolly]{Salammbo Connolly} \address{Universit\'e Paris-Saclay, CNRS, Laboratoire de Math\'ematiques d'Orsay, 
91405 Orsay, France}
\email{salammbo.connolly@universite-paris-saclay.fr}
\urladdr{https://www.imo.universite-paris-saclay.fr/fr/perso/salammbo-connolly/}

\begin{abstract}
Linearized Legendrian contact homology (LCH) and bilinearized LCH are important homological invariants for Legendrian submanifolds in contact geometry. 
For legendrian knots in $\R^3$, very little was previously known about the possibility of having torsion in these invariants when they are defined over integer coefficients. In this paper, we give properties of torsion that can appear in linearized LCH with integer coefficients, and also give the full geography of bilinearized LCH with integer coefficients.
\end{abstract} 

\maketitle

%%%%%%%%%%%%%%%%%%%%%%%%%%%%%%%%%%%%%%%%%%%%%%%%%%%%%%%%%%%
\section{Introduction}
\label{sec:introduction}
%%%%%%%%%%%%%%%%%%%%%%%%%%%%%%%%%%%%%%%%%%%%%%%%%%%%%%%%%%%
Legendrian contact homology, introduced by Chekanov \cite{C}, is an invariant of Legendrian submanifolds of contact manifolds, up to Legendrian isotopy, and one of the most powerful tools in their study. It is the homology of a differential graded algebra $(\mathcal{A}(\Lambda),\partial)$ associated to a Legendrian $\Lambda$, called the Chekanov-Eliashberg DGA. However, due to the non-commutativity of $\mathcal{A}(\Lambda)$, it is in general very difficult to compute. It is therefore helpful to linearize the complex with the help of DGA augmentations. Given the choice of a unique augmentation $\varepsilon$, we get \textit{linearized Legendrian contact homology}, or $LCH^\varepsilon(\Lambda)$. If we choose two augmentations $\varepsilon_1$ and $\varepsilon_2$, we get \textit{bilinearized Legendrian contact homology}, or $LCH^{\varepsilon_1,\varepsilon_2}(\Lambda)$ (if $\varepsilon_1=\varepsilon_2$, we retrieve the linearized homology). In each case, while the homology does depend on the choices of augmentations, the collection of all homologies for all possible augmentations is an invariant of the Legendrian isotopy class.

To understand these invariants, it is useful to know their geography (ie. determining all possible values). For coefficients in $\Z/2\Z$, this problem was solved in the linearized case by Bourgeois, Sabloff, and Traynor \cite{BST}, and in the bi-linearized case by Bourgeois and Galant \cite{BG}. However, these constructions do not adapt to the case of integer coefficients because they do not create torsion. In fact, the possibility of there being torsion in (bi)linearized LCH is not well understood, and has been of interest in recent years. In higher dimensions, examples of torsion have been known for linearized LCH since \cite{EES05}. Golovko then showed in \cite{G} that, for dimensions $n=3$ or $n\geq 5$, any finitely generated group $G$ can appear as the degree $(n-1)$ linearized LCH group of a Legendrian in $\R^{2n+1}$ for a certain augmentation. This construction uses the torsion that appears in the singular homology of the Legendrian, and does not extend to Legendrians with simpler topology, such as knots. The question of whether or not any finitely generated abelian group can appear in the (bi)linearized LCH of a knot was therefore an open question, which was answered positively in the linearized case by Lipschitz and Ng in \cite{LN}, in degrees different than 0 and 1. Such results were still far from the full geography for the linearized case, and in the bilinearized case, no examples of torsion were known to us. 

In this paper, we look at Legendrian knots in $\mathbb{R}^3$, with the standard contact structure $\xi = \ker(dz-ydx)$. Our first result is a partial geography result for linearized Legendrian contact homology, which in particular extends the result of Lipschitz and Ng to the homology in degree 0, and gives more information about the homology in degree 1. 

Given a graded $\Z$-module $M$, we denote by $\overline{M}$ the graded $\Z$-module with the opposite grading, or
in other words $\overline{M}_k = M_{-k}$ for all $k \in \Z$. We also denote by $M[s]$ the graded $\Z$-module with its grading 
shifted by $s \in \Z$, or in other words $M[s]_k = M_{k+s}$ for all $k \in \Z$. In particular, the graded $\Z$-module freely generated
by a single element of degree $a$ is isomorphic to $\Z[-a]$.

\begin{Thm} \label{thm:linLCH}
For any Legendrian knot $\Lambda$ in $(\R^3, \xi_{\rm std})$ and any $\Z$-valued augmentation $\varepsilon$ of its Chekanov-Eliashberg 
DGA, their linearized Legendrian contact homology has the form
$$
LCH^{\varepsilon}(\Lambda) \simeq \Z[-1] \oplus F \oplus \overline{F} \oplus T \oplus \overline{T}[1],
$$
where $F$ is a finitely generated graded free $\Z$-module and $T$ is a finitely generated graded torsion $\Z$-module. \\
Conversely, for any finitely generated graded free $\Z$-module $F$ and any finitely generated graded torsion $\Z$-module $T$, there exist
an integer $d \ge 0$ (depending only on $T$), a Legendrian knot $\Lambda$ in $(\R^3, \xi_{\rm std})$ and a $\Z$-valued 
augmentation $\varepsilon$ of its Chekanov-Eliashberg DGA, such that
$$
LCH^{\varepsilon}(\Lambda) \simeq \Z[-1] \oplus F \oplus \overline{F} \oplus \Z^{2d}[0] \oplus T \oplus \overline{T}[1].
$$
\end{Thm}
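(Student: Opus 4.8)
The plan for the first statement is to combine Sabloff duality with the universal coefficient theorem. The linearized complex $(C^{\varepsilon}_{*}(\Lambda),\partial^{\varepsilon})$ attached to the $\Z$-valued augmentation $\varepsilon$ is a finitely generated free chain complex over $\Z$, so $LCH^{\varepsilon}(\Lambda)$ is a finitely generated graded $\Z$-module; write $LCH^{\varepsilon}_{k}(\Lambda)=F_{k}\oplus T_{k}$ with $F_{k}$ free and $T_{k}$ finite. I would first establish the integral version of the Sabloff--Ekholm--Etnyre duality long exact sequence relating $LCH^{\varepsilon}_{*}(\Lambda)$, the linearized contact cohomology $LCH_{\varepsilon}^{*}(\Lambda)$, and $H_{*}(\Lambda;\Z)=H_{*}(S^{1};\Z)$, together with the fact that the self-pairing it induces in the symmetric degree $0$ is skew-symmetric. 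Since $H_{*}(S^{1};\Z)$ is concentrated in degrees $0$ and $1$, the exact sequence yields isomorphisms $LCH^{\varepsilon}_{k}(\Lambda)\cong LCH_{\varepsilon}^{-k}(\Lambda)$ up to a single net discrepancy by $\Z$, which is exactly the summand $\Z[-1]$ carrying the (co)fundamental class. Feeding in universal coefficients, $LCH_{\varepsilon}^{-k}(\Lambda)\cong\operatorname{Hom}(LCH^{\varepsilon}_{-k}(\Lambda),\Z)\oplus\operatorname{Ext}^{1}_{\Z}(LCH^{\varepsilon}_{-k-1}(\Lambda),\Z)\cong F_{-k}\oplus T_{-k-1}$, hence $F_{k}\cong F_{-k}$ (away from the degree-$1$ discrepancy) and $T_{k}\cong T_{-k-1}$ for all $k$; reassembling the free parts produces $\Z[-1]\oplus F\oplus\overline{F}$ and reassembling the torsion produces $T\oplus\overline{T}[1]$. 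The only non-formal point is that the free part of $LCH^{\varepsilon}_{0}(\Lambda)$ must have even rank, which is precisely what the skew-symmetry of the middle pairing guarantees.

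For the converse I would realize the target module by assembling elementary Legendrian knots under connected sum, using that after splitting off the canonical $\Z[-1]$-summand linearized LCH is additive, $\widetilde{LCH}^{\,\varepsilon_{1}\#\varepsilon_{2}}(\Lambda_{1}\#\Lambda_{2})\cong\widetilde{LCH}^{\,\varepsilon_{1}}(\Lambda_{1})\oplus\widetilde{LCH}^{\,\varepsilon_{2}}(\Lambda_{2})$. It then suffices to produce two families of building blocks: for each degree $k$, a Legendrian knot with an augmentation whose linearized LCH is $\Z[-1]\oplus\Z[-k]\oplus\Z[k]$, i.e.\ a free generator in degree $k$ together with its Sabloff-dual in degree $-k$ (available from standard families such as suitably stabilized unknots or small torus knots, whose Chekanov--Eliashberg DGAs and augmentations are explicit; note that for $k=0$ this block is $\Z[-1]\oplus\Z^{2}[0]$, consistent with the parity constraint); and for each $m\ge 2$ and each degree $k$, a Legendrian knot with an augmentation whose linearized LCH is $\Z[-1]\oplus\Z^{2d_{m}}[0]\oplus(\Z/m\Z)[-k]\oplus\overline{(\Z/m\Z)[-k]}[1]$, built around a local model consisting of two Reeb chords $a,b$ of degrees $k+1$ and $k$ with linearized differential $\partial^{\varepsilon}a=m\,b$ --- this is the construction of Lipschitz and Ng \cite{LN} for $k\neq 0,1$, which I would extend to all $k$. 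Taking the connected sum of one first-type block for each free generator of $F$ and one second-type block for each cyclic summand of $T$, additivity then gives $LCH^{\varepsilon}(\Lambda)\cong\Z[-1]\oplus F\oplus\overline{F}\oplus\Z^{2d}[0]\oplus T\oplus\overline{T}[1]$ with $d=\sum_{m}d_{m}$ depending only on $T$.

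The main obstacle is the passage from $\Z/2\Z$ to $\Z$, where sign and orientation data invisible mod $2$ become essential. For the first statement this means coherently orienting the moduli spaces of holomorphic disks with two positive punctures that underlie the Ekholm--Etnyre--Sabloff duality, so that the duality morphism and, crucially, its degree-$0$ self-pairing are defined over $\Z$ and the latter is genuinely skew-symmetric; the evenness of the free rank of $LCH^{\varepsilon}_{0}(\Lambda)$, and hence the very shape of the answer, depends entirely on this. For the converse the delicate part is the explicit realization in degrees $0$ and $1$, which the Lipschitz--Ng construction is designed to avoid: one must arrange a Legendrian front in which the linearized differential between suitable Reeb chords of degrees $1$ and $0$ is exactly multiplication by $m$, with no lower-order terms and with the correct signs, while keeping the front embedded and the chosen augmentation genuine --- and it is at this step that the extra summand $\Z^{2d}[0]$ is forced, since by the first part it can never be cancelled away.
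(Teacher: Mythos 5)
Your overall architecture (duality plus universal coefficients for the constraints; connected sums of elementary blocks for the realization) matches the paper's, and your identification of the torsion symmetry $T_k\cong T_{-k-1}$ via the long exact sequence and UCT is essentially Corollary~\ref{cor:torsion_iso}. But the two points you yourself flag as ``the main obstacles'' are precisely the non-formal content, and you leave both unresolved, so there are genuine gaps.

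For the first statement, your derivation of the isomorphisms $LCH^\varepsilon_k\cong LCH_\varepsilon^{-k}$ ``up to a single net discrepancy'' silently uses that the discrepancy lands where you want it; this requires knowing that $\tau_0$ vanishes and $\tau_1$ is surjective over $\Z$ (Proposition~\ref{prop:tau}), which needs a sign check on the disks computing $\tau_0$ --- easy in the linearized case since $\tau_0=\varepsilon-\varepsilon$ at chain level, but it must be said. More seriously, you derive the evenness of the free rank in degree $0$ from an alleged skew-symmetric self-pairing on $LCH_0^\varepsilon$ over $\Z$. Establishing that this pairing is defined over $\Z$, skew-symmetric, and nondegenerate on the free quotient is exactly the kind of orientation analysis you say would be needed, and you do not supply it. The paper avoids this entirely: once one has $r_k=r_{-k}$ for $|k|>1$ and $r_1=1+r_{-1}$ (which follow from the duality sequence with rational coefficients), the evenness of $r_0$ is forced by the elementary fact that the Euler characteristic of the complex is the Thurston--Bennequin invariant, which is odd for a Legendrian knot. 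You should replace the pairing argument by this parity argument, or else actually prove the skew-symmetry over $\Z$.

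For the converse, three concrete problems. First, your proposed free building blocks ``suitably stabilized unknots or small torus knots'' do not work as stated: stabilized unknots admit no augmentations at all. The paper instead uses the Legendrian Hopf link with a Maslov shift $k$ between its components, followed by a connected sum (Section~\ref{sec:freelinLCH}). Second, the additivity of reduced linearized LCH under connected sum is not automatic over $\Z$: one must check that the augmentation extends over the new degree-$0$ crossing (the paper sets it to $-1$ and verifies a condition on disks through that crossing), and one needs the surjectivity of the relevant $\tau_1$-components (Proposition~\ref{prop:connsum}); without these hypotheses the connected sum can instead create a degree-$0$ generator. Third, and most importantly, the realization of torsion in arbitrary degrees --- including $0$ and $1$ --- is exactly what Lipshitz--Ng do not provide, and saying you ``would extend'' their local model $\partial a=mb$ to all degrees is not a proof: one must exhibit an embedded front, a genuine $\Z$-valued augmentation, and control all other disks. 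The paper does this with the trefoil equipped with the augmentation $\varepsilon_n$ interlaced with shifted Legendrian unknots (Section~\ref{sec:torsionlin}), and it is this construction that produces the parasitic $\Z^{2d}[0]$ summand (one $\Z^2[0]$ per torsion order used); your remark that $\Z^{2d}[0]$ is ``forced'' by the first part is not right --- nothing in the constraints requires it, which is why the paper conjectures $d=0$ is achievable.
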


We conjecture that we can in fact take $d = 0$ in the above result. This would determine the entire geography of linearized Legendrian 
contact homology with coefficients in $\Z$. Note that in order to prove this conjecture, it would suffice to find a single example of a
Legendrian knot $\Lambda_{\rm tor}$ equipped with a $\Z$-valued augmentation $\varepsilon_{\rm tor}$ such that 
$LCH^{\varepsilon_{\rm tor}}(\Lambda_{\rm tor}) \simeq \Z[-1]$, and allowing to create a pair of torsion generators in LCH by
interlacing two strands of $\Lambda_{\rm tor}$ with a Legendrian unknot.

In the case of bilinearized LCH, we obtain the entire geography of this invariant for Legendrian knots in dimension $3$.

\begin{Thm} \label{thm:bilinLCH}
For any Legendrian knot $\Lambda$ in $(\R^3, \xi_{\rm std})$ and any pair of non-dga-homotopic $\Z$-valued augmentations 
$\varepsilon_1, \varepsilon_2$ of its Chekanov-Eliashberg DGA, their bilinearized Legendrian contact homology has the form
$$
LCH^{\varepsilon_1, \varepsilon_2}(\Lambda) \simeq \Z[0] \oplus F \oplus T,
$$
where $F$ is a finitely generated graded free $\Z$-module of even rank and $T$ is a finitely generated graded torsion $\Z$-module. \\
Conversely, for any finitely generated graded free $\Z$-module $F$ of even rank and any finitely generated graded torsion $\Z$-module $T$, 
there exist a Legendrian knot $\Lambda$ in $(\R^3, \xi_{\rm std})$ and a pair of non-dga-homotopic $\Z$-valued augmentations $\varepsilon_1, 
\varepsilon_2$ of its Chekanov-Eliashberg DGA, such that
$$
LCH^{\varepsilon_1, \varepsilon_2}(\Lambda) \simeq \Z[0] \oplus F \oplus T.
$$
\end{Thm}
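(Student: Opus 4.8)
The statement has two halves: a constraint (every such bilinearized homology has the displayed shape) and a realization (every such shape occurs). For the constraint half, my plan is to run a Sabloff-type duality argument in the bilinearized setting over $\Z$. The ingredients are: a bilinearized duality long exact sequence over $\Z$ relating $LCH^{\varepsilon_1,\varepsilon_2}_*(\Lambda)$, the bilinearized cohomology $LCH^*_{\varepsilon_2,\varepsilon_1}(\Lambda)$ with the two augmentations swapped, and $H_*(\Lambda;\Z)\cong H_*(S^1;\Z)=\Z[0]\oplus\Z[-1]$; the universal coefficient theorem over $\Z$; and the fact that the non-dga-homotopy of $\varepsilon_1,\varepsilon_2$ is exactly the obstruction to a certain connecting map in this sequence vanishing. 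Since $LCH^{\varepsilon_1,\varepsilon_2}(\Lambda)$ is already the homology of a finite free $\Z$-complex, it is automatically a sum of a finitely generated free module and a finitely generated torsion module; the content is to produce a free summand in degree $0$ and to fix the parity of the free rank.

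For the degree-$0$ free summand: because $H_*(S^1)$ is concentrated in degrees $0$ and $1$, the duality sequence yields isomorphisms between $LCH^{\varepsilon_1,\varepsilon_2}$ and the dual of $LCH_{\varepsilon_2,\varepsilon_1}$ outside degrees $0,1$, with the interaction confined to two connecting maps. Because $\varepsilon_1\not\simeq\varepsilon_2$, the connecting map $LCH^{\varepsilon_1,\varepsilon_2}_1\to H_0(S^1)$ vanishes --- this is precisely where the homotopy class of the pair enters, dual to the situation in Theorem \ref{thm:linLCH}, where it is the $H_1$-end of $H_*(S^1)$ that survives --- so $H_0(S^1)=\Z$ injects into $LCH^{\varepsilon_1,\varepsilon_2}_0$ with rank-one free image, whence $\mathrm{rk}_\Z\big(LCH^{\varepsilon_1,\varepsilon_2}_0\big)\ge 1$ and a $\Z[0]$ summand splits off. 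For the parity: the graded Euler characteristic of $LCH^{\varepsilon_1,\varepsilon_2}(\Lambda)$ equals that of the bilinearized chain complex, so it is independent of the augmentations and equal to $\chi\big(LCH^{\varepsilon_1}(\Lambda)\big)$, which by Theorem \ref{thm:linLCH} has the shape $-1+2\chi(F')$ for some free module $F'$ and is therefore odd; subtracting the contribution $1$ of the $\Z[0]$ summand and using $\chi(T)=0$ gives $\chi(F)$ even, equivalently $\mathrm{rk}\,F$ even.

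For the realization half, the plan is constructive and modular. The base example is the right-handed Legendrian trefoil $\Lambda_0$: in Chekanov's presentation of its Chekanov-Eliashberg DGA, with generators $a_1,a_2$ of degree $1$, $b_1,b_2,b_3$ of degree $0$ and a cubic term $b_1b_2b_3$ inside one of the $\partial a_i$, I would take $\varepsilon_1,\varepsilon_2$ sending $(b_1,b_2,b_3)$ to $(1,0,0)$ and $(0,0,1)$; a Smith-normal-form computation --- in which the cubic term is exactly what forces the bilinearized differential to have full rank $2$ with torsion-free cokernel --- gives $LCH^{\varepsilon_1,\varepsilon_2}(\Lambda_0)\cong\Z[0]$, and since this lacks the $\Z[-1]$ summand that Theorem \ref{thm:linLCH} would force if the two augmentations were dga-homotopic, they are automatically non-dga-homotopic. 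From $\Lambda_0$ I would then apply two families of disjointly supported local modifications. To build the free part $F$ (of even rank), I would graft in local front/tangle modifications, of the same flavor as the torsion construction below but with auxiliary data tuned to leave generators uncancelled, using cusp insertions to place them in prescribed degrees; the even-rank constraint is respected automatically since the Euler characteristic is pinned down. To build a torsion summand $\Z/n[-b]$, I would interlace two strands of the current Legendrian with a small Legendrian unknot and band-sum it in, which introduces new Reeb chords including a pair $x$ (degree $b+1$), $y$ (degree $b$) and an auxiliary degree-$0$ chord $c$ with $\partial c=0$ that occurs in a word of $\partial x$, so that $\partial^{\varepsilon_1,\varepsilon_2}x=\pm\,\varepsilon_1(c)\,y$; setting $\varepsilon_1(c)=n$ (and choosing $\varepsilon_2$ to keep everything consistent) while arranging the remaining new chords into acyclic pairs makes the net effect on bilinearized LCH exactly $\Z/n[-b]$. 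Since all these modifications are local, their effects on bilinearized LCH are additive, so after finitely many steps the resulting knot $\Lambda$ satisfies $LCH^{\varepsilon_1,\varepsilon_2}(\Lambda)\cong\Z[0]\oplus F\oplus T$ with $\varepsilon_1,\varepsilon_2$ still $\Z$-valued and non-dga-homotopic.

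The hard part will be the torsion construction. One has to exhibit an honest local Legendrian tangle whose Chekanov-Eliashberg DGA has exactly the shape above, verify over $\Z$ --- orientation signs of holomorphic disks included --- that the bilinearized differential between the two new generators is precisely multiplication by $n$ and not $\pm n$ with an error term, tune the gradings of $x$ and $y$ to $b+1$ and $b$ by inserting the correct number of cusps without perturbing the relevant disk count, and, most importantly, arrange that every other new generator cancels so that no stray free summand survives --- this last requirement being exactly what is harder to enforce with a single augmentation and what produces the residual $\Z^{2d}$ in Theorem \ref{thm:linLCH}. One must also check that the prescribed augmentations genuinely exist --- in particular that the value $\varepsilon_1(c)=n$ is compatible with the augmentation equation $\varepsilon_1\circ\partial=0$ --- and that they remain non-dga-homotopic. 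Given a local model with these properties, the remainder --- the duality bookkeeping in the constraint direction and the assembly of local pieces in the realization direction --- is routine.
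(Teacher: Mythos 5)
Your constraint half is essentially the paper's argument and is sound: the duality exact sequence together with the behavior of $\tau_0,\tau_1$ for non-dga-homotopic augmentations (Proposition~\ref{prop:tau}) forces a free generator in degree $0$, and the parity of $\operatorname{rk}F$ follows from the oddness of the Euler characteristic (the Thurston--Bennequin invariant). One small correction: in the sequence \eqref{eq:dualityseq-bilin} the relevant fact is that $\tau_0\colon LCH_0^{\varepsilon_1,\varepsilon_2}(\Lambda)\to H_0(\Lambda)=\Z$ is \emph{nonzero} (so $LCH_0$ contains an element of infinite order), not that $H_0$ injects into $LCH_0$; the conclusion is the same. Your observation that $LCH^{\varepsilon_1,\varepsilon_2}\simeq\Z[0]$ forces non-dga-homotopy via Theorem~\ref{thm:linLCH} is correct and a nice shortcut. (Minor slip: $(b_1,b_2,b_3)\mapsto(1,0,0)$ violates the augmentation equation $1+\varepsilon(b_1)+\varepsilon(b_3)+\varepsilon(b_1)\varepsilon(b_2)\varepsilon(b_3)=0$; you need $-1$, not $1$.)

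The genuine gap is in the realization half, at the step you dismiss as ``the effects are additive.'' They are not. By Proposition~\ref{prop:connsum}, every connected sum either deletes a degree-$1$ free generator (when $\tau_{1,1}-\tau_{1,2}$ is surjective) or \emph{creates} a degree-$0$ free generator (when it vanishes) --- there is no neutral option. Each of your torsion-producing local pieces, once closed up, has bilinearized homology $\Z[0]$ with no degree-$1$ class at all, so the moment you try to band-sum a second piece with a different torsion order onto the first, $\tau_{1,1}-\tau_{1,2}$ cannot be surjective and you pick up a stray $\Z[0]$; iterating accumulates exactly the kind of residual free summand that appears as $\Z^{2d}$ in Theorem~\ref{thm:linLCH}, and which Theorem~\ref{thm:bilinLCH} claims can be avoided entirely. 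The same obstruction hits your construction of the free part $F$. Removing these parasitic generators is the actual content of the paper's proof: it replaces single connected sums by a \emph{double connected sum} followed by an \emph{unclasp} (removal of a pair of degree-$0$ crossings), and Propositions~\ref{prop:unclasp_1} and~\ref{prop:unclasp_2} --- which require tracking the augmentations through an explicit sequence of Reidemeister moves and comparing the disks before and after the unclasp --- show that this combination deletes exactly two degree-$0$ free generators without touching the torsion. Your proposal contains no mechanism playing this role, so as written the assembly step fails and only realizes torsion of a single order (as in Section~\ref{subsec:trefoil_bi}) without extra free summands.
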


\subsection{Acknowledgments} This work was initiated as the Master thesis of the second named author under the supervision of the first named author.
We would like to thank Roman Golovko, as well as Robert Lipshitz and Lenny Ng, for
stimulating conversations about our respective works on this topic. 
Both authors were partially supported by the ANR project COSY (21-CE40-0002).

%%%%%%%%%%%%%%%%%%%%%%%%%%%%%%%%%%%%%%%%%%%%%%%%%%%%%%%%%%%
\section{Linearized and bilinearized Legendrian contact homology}
\label{sec:lbLCH}
%%%%%%%%%%%%%%%%%%%%%%%%%%%%%%%%%%%%%%%%%%%%%%%%%%%%%%%%%%%

\subsection{Chekanov-Eliashberg DGA}
\label{sec:CE-DGA}

Let $\Lambda$ be a Legendrian knot or link in $\R^3$ equipped with the standard contact structure $\xi_{std} = \ker(dz-y \, dx)$.
After a Legendrian isotopy, we can assume that the front projection of $\Lambda$ to the $xz$-plane has only transverse double points
and generic cusps. We fix a base point $*_i$ distinct from the double points and cusps, as well as an orientation, for each connected 
component $\Lambda_i$ 
of $\Lambda$. Throughout this paper, we will assume that the rotation number of each connected component of $\Lambda$ vanishes, 
so that its front projection admits a Maslov potential: each strand (i.e. connected component of the complement of the cusps) can be 
assigned an integer, so that when two strands meet at a cusp, the Maslov potential of the upper strand is one higher than the Maslov 
potential of the lower strand. Note that the Maslov potential of $\Lambda$ is uniquely defined up to the addition of a constant for each
connected component of $\Lambda$. Given two connected components $\Lambda_i$ and $\Lambda_j$ of $\Lambda$, we say that the
shift between their Maslov potentials is the difference between the Maslov potential at $*_j$ and the Maslov potential at $*_i$.

We now define Legendrian contact homology, mainly following the exposition from~\cite{N}.
Let $\mathcal{A}$ be the unital, noncommutative algebra freely generated over the ring of integers $\Z$ by the crossings and the right 
cusps in the front projection of $\Lambda$, as well as a pair of formal generators $t_i$ and $t_i^{-1}$ satisfying $t_i \cdot t_i^{-1} = t_i^{-1} \cdot t_i = 1$
for each connected component $\Lambda_i$ of $\Lambda$. Each generator is graded as follows: the formal variables $t_i$ and $t_i^{-1}$ have 
grading $0$, right cusps have grading $1$, and crossings are graded by the Maslov potential of the upper strand minus the Maslov potential 
of the lower strand, where the relative positions of the strands are considered on the left side of the crossing.

If $n \ge 0$, let $D^2_n = D^2 \setminus \{ x, y_1, \ldots, y_n \}$ be the closed disk with $n+1$ distinct punctures on its boundary, encountered 
in the given cyclic ordering when going around the boundary in the counterclockwise direction.
If $a, b_1, \ldots, b_n$ are crossings or right cusps in the front projection of $\Lambda$, let $\Delta(a;b_1, \ldots, b_n)$ be the collection of maps
$u: D^2_n \to \R^2_{xz}$ modulo reparametrization such that:
\begin{enumerate}
\item $u$ is an immersion on the interior of $D^2$,
\item  $u$ along the boundary of $D^2_n$ is an immersion, except at left cusps where the map $u$ is locally onto the right-facing limited by the cusp,
and except at right cusps where the map $u$ is locally two to one in left-facing region limited by the cusp and one to one in the rest of the neighborhood 
of the cusp,
\item $u$ sends $x$ to $a$ and maps a neighborhood of $x$ to the left-facing quadrant of $a$ if $a$ is a crossing, or to the interior of $a$
if $a$ is a right cusp,
\item $u$ sends $y_i$ to $b_i$ and maps a neighborhood of $y_i$ to the upward-facing or the downward-facing quadrant of $b_i$ if $b_i$ is a crossing,
or to the complement of the upper strand, or of the lower strand, or of the closure of the left-facing region limited by the cusp, if $b_i$ is a right cusp.
\end{enumerate}

For each $u \in \Delta(a;b_1, \ldots, b_n)$, we define $w(u) \in \mathcal{A}$ by
 $$
 w(u) = t(\eta_0) c(b_1) t(\eta_1) c(b_2) \ldots c(b_n) t(\eta_n),
 $$
where $c(b_i) = b_i^2$ if is a right cusp and the image of $u$ near $b_i$ is locally the complement of the closure of the left-facing region limited by the cusp
at $b_i$, and $c(b_i) = b_i$ otherwise. Moreover, $\eta_j$ is the image by $u$ of the arc in $\partial D^2_n$ limited by the boundary punctures $x_j$ and 
$x_{j+1}$ (with the convention that $y_0 = y_{n+1} = x$), so that $\eta_j$ is contained in the front projection of a single connected component $\Lambda_i$ 
of $\Lambda$. Then $t(\eta_j)$ is defined as $t_i^k$, where $k$ is the algebraic intersection number of $\eta_j$ with the base point $*_i$ in the oriented manifold
$\Lambda_i$. 

For each $u \in \Delta(a;b_1, \ldots, b_n)$, we also define $s(u)= (-1)^d$ where $d$ is the number of generators among $b_1, \ldots, b_n$ that are crossings
of even grading such that $u$ covers their downward-facing quadrant.

One can now define a differential $\partial: \mathcal{A} \to \mathcal{A}$ as a linear map lowering the grading by $1$ and satisfying the graded Leibniz rule
$\partial(cc') = \partial(c) c' + (-1)^{|c|} c \partial(c')$ for any $c, c' \in \mathcal{A}$ of pure grading, by the formula
$$
\partial(a) = r(a) + \sum_{\substack{n \ge 0, b_1, \ldots, b_n \\ u \in \Delta(a;b_1, \ldots, b_n)}} s(u)w(u),  
$$
where $r(a) = 1$ if $a$ is a right cusp and $r(a)=0$ if $a$ is a crossing.

The pair $(\mathcal{A}, \partial)$ is called the Chekanov-Eliashberg differential graded algebra (DGA) of $\Lambda$, it satisfies $\partial \circ \partial = 0$ and its
homology is invariant under Legendrian isotopy of $\Lambda$, provided the shifts between the Maslov potentials of its connected components remain unchanged.

\subsection{Linearized LCH}
\label{sec:lLCH}

The computation of the homology of the Chekanov-Eliashberg algebra is often very complicated due to the noncommutativity of $\mathcal{A}$, so that it is customary to use augmentations in order to
extract more computable invariants from this DGA. An augmentation of $\Lambda$ is a unital DGA map $\varepsilon: (\mathcal{A}, \partial) \to (\Z,0)$ preserving 
the grading. Let $C(\Lambda)$ be the $\Z$-module freely generated by the crossings and right cusps in the front projection of $\Lambda$, with the same grading 
as before. We define a linearized differential $\partial^\varepsilon: C(\Lambda) \to C(\Lambda)$ as a linear map lowering the grading by one, such that
\begin{eqnarray*}
\partial^\varepsilon (a) &=& \sum_{\substack{n \ge 0, b_1, \ldots, b_n \\ u \in \Delta(a;b_1, \ldots, b_n)}} \sum_{k=1}^n s(u)
\varepsilon((\eta_0) c(b_1) \ldots c(b_{k-1}) t(\eta_{k-1})) \ b_k \\ 
&& \hspace{3.5cm} \varepsilon(t(\eta_k) c(b_{k+1}) \ldots c(b_n) t(\eta_n)).
\end{eqnarray*}
The homology of this chain complex $(C(\Lambda), \partial^\varepsilon)$ is a graded $\Z$-module denoted by $LCH^\varepsilon(\Lambda)$ and called linearized
Legendrian contact homology (with respect to the augmentation $\varepsilon$). We say that two augmentations $\varepsilon$ and $\varepsilon'$ of $\Lambda$ 
are dga-homotopic if there exists an $(\varepsilon, \varepsilon')$-derivation $K: \mathcal{A} \to \Z$, i.e. $K(ab) = K(a) \varepsilon'(b) + (-1)^{|a|} \varepsilon(a) K(b)$ for
all $a, b \in \mathcal{A}$ of pure grading, raising the grading by $1$ and satisfying $\varepsilon - \varepsilon' = K \circ \partial$. It turns out that 
$LCH^\varepsilon(\Lambda)$ depends on $\varepsilon$ only through its dga-homotopy class. The collection of these graded $\Z$-modules for all (dga-homotopy 
classes of) augmentations of $\Lambda$ is invariant under Legendrian isotopy, again provided the shifts between the Maslov potentials of its connected 
components remain unchanged.

In \cite{L}, Leverson showed that for any link, an augmentation over a field must send $t$ to $-1$. It follows that the same is true for $\Z$-value augmentations. For the rest of this paper, for any choice of augmentation, we will then set $\varepsilon(t_i)=-1$.

\subsection{Bilinearized LCH}
\label{sec:bLCH}

Note that linearized LCH depends only on the abelianization of $(\mathcal{A}, \partial)$. In order to retain more information from the original DGA, one can use instead
a pair of augmentations $\varepsilon_1$ and $\varepsilon_2$, in order to define as in~\cite{BC} a bilinearized differential $\partial^{\varepsilon_1, \varepsilon_2}: C(\Lambda) \to C(\Lambda)$ as a linear map lowering the grading by one, such that
\begin{eqnarray*}
\partial^{\varepsilon_1, \varepsilon_2} (a) &=& \sum_{\substack{n \ge 0, b_1, \ldots, b_n \\ u \in \Delta(a;b_1, \ldots, b_n)}} \sum_{k=1}^n s(u)
\varepsilon_1((\eta_0) c(b_1) \ldots c(b_{k-1}) t(\eta_{k-1})) \ b_k \\ 
&& \hspace{3.5cm} \varepsilon_2(t(\eta_k) c(b_{k+1}) \ldots c(b_n) t(\eta_n)).
\end{eqnarray*}
The homology of this chain complex $(C(\Lambda), \partial^{\varepsilon_1, \varepsilon_2})$ is a graded $\Z$-module denoted by 
$LCH^{\varepsilon_1, \varepsilon_2}(\Lambda)$ and called bilinearized Legendrian contact homology (with respect to the augmentations $\varepsilon_1$ and 
$\varepsilon_2$). As above, $LCH^{\varepsilon_1, \varepsilon_2}(\Lambda)$ depends on $\varepsilon_1$ and $\varepsilon_2$ only through their respective dga-homotopy classes. In particular, if $\varepsilon_1$ and $\varepsilon_2$ are dga-homotopic, it is isomorphic to the linearized LCH of $\Lambda$ with respect to
$\varepsilon_1$, or to $\varepsilon_2$. The collection of these graded $\Z$-modules for all pairs of (dga-homotopy classes of) augmentations of $\Lambda$ is 
invariant under Legendrian isotopy, again provided the shifts between the Maslov potentials of its connected components remain unchanged.

\subsection{Duality}
\label{sec:duality}

One of the main properties satisfied by (bi)linearized LCH is some type of duality. This duality property was first established in dimension $3$ by Sabloff~\cite{S}, 
and was later generalized in higher dimensions by Ekholm, Etnyre and Sabloff~\cite{EES} in the form of a long exact sequence. We prefer to use the latter formulation,
as it emphasises better the structural constraints on (bi)linearized LCH. The duality exact sequence for linearized LCH reads 
\begin{equation} \label{eq:dualityseq-lin}
\ldots \to LCH^{n-k-1}_{\varepsilon}(\Lambda) \to LCH^\varepsilon_k(\Lambda) \stackrel{\tau_k}{\longrightarrow} H_k(\Lambda) 
\stackrel{\sigma_{n-k}}{\longrightarrow} LCH^{n-k}_{\varepsilon}(\Lambda) \to \ldots
\end{equation}
where $H_k(\Lambda)$ denotes the singular homology of the Legendrian knot or link $\Lambda$ with integer coefficients and vanishes except when $k =0$ or $1$.
In the case of bilinearized LCH, the duality exact sequence becomes~\cite{BC}
\begin{equation} \label{eq:dualityseq-bilin}
\ldots \to LCH^{n-k-1}_{\varepsilon_2, \varepsilon_1}(\Lambda) \to LCH^{\varepsilon_1,\varepsilon_2}_k(\Lambda) 
\stackrel{\tau_k}{\longrightarrow} H_k(\Lambda) \stackrel{\sigma_{n-k}}{\longrightarrow} LCH^{n-k}_{\varepsilon_2, \varepsilon_1}
(\Lambda) \to \ldots
\end{equation}
Let us now summarize the behavior of the maps $\tau_0$ and $\tau_1$ for these invariants when $\Lambda$ is connected, so that
$H_0(\Lambda) = H_1(\Lambda) = \Z$.

\begin{Prop}  \label{prop:tau}
Let $\Lambda$ be a Legendrian knot. If $\varepsilon$ is an augmentation of $\Lambda$, then
$\tau_0: LCH_0^\varepsilon(\Lambda) \to H_0(\Lambda)$ vanishes and $\tau_1: LCH_1^\varepsilon(\Lambda) \to H_1(\Lambda)$
is surjective. If $\varepsilon_1$ and $\varepsilon_2$ are augmentations of $\Lambda$ that are not dga-homotopic, then the map
$\tau_1: LCH_1^{\varepsilon_1, \varepsilon_2}(\Lambda) \to H_1(\Lambda)$ vanishes and the map 
$\tau_0: LCH_0^{\varepsilon_1, \varepsilon_2}(\Lambda) \to H_0(\Lambda)$ does not vanish.
\end{Prop}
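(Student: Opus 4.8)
The plan is to deduce Proposition~\ref{prop:tau} from the duality exact sequences \eqref{eq:dualityseq-lin} and \eqref{eq:dualityseq-bilin} by pinning down the two boundary maps $\sigma_0$ and $\sigma_1$ sitting next to $\tau_1$ and $\tau_0$. Since $\Lambda$ is a knot in $\R^3$ we have $n=1$, and since $\Lambda$ is connected $H_0(\Lambda)\cong H_1(\Lambda)\cong\Z$ while $H_k(\Lambda)=0$ for $k\notin\{0,1\}$. Substituting $k=2,1,0,-1$ into \eqref{eq:dualityseq-lin} and using $H_2(\Lambda)=H_{-1}(\Lambda)=0$ extracts the exact portion
\[
LCH^{\varepsilon}_1(\Lambda)\xrightarrow{\tau_1}\Z\xrightarrow{\sigma_0} LCH^{0}_{\varepsilon}(\Lambda)\to LCH^{\varepsilon}_0(\Lambda)\xrightarrow{\tau_0}\Z\xrightarrow{\sigma_1} LCH^{1}_{\varepsilon}(\Lambda),
\]
where the first copy of $\Z$ is $H_1(\Lambda)$ and the second is $H_0(\Lambda)$, and likewise from \eqref{eq:dualityseq-bilin} with the two augmentations interchanged on the cohomology side. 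Exactness at the two copies of $\Z$ gives $\tau_1$ surjective $\iff \sigma_0=0$, and $\tau_1=0\iff\sigma_0$ injective; $\tau_0=0\iff\sigma_1$ injective, and $\tau_0\neq0\iff\sigma_1$ not injective. So it suffices to determine $\sigma_0$ on the fundamental class $[\Lambda]\in H_1(\Lambda)$ and $\sigma_1$ on the point class $[\mathrm{pt}]\in H_0(\Lambda)$ in each setting.

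For a single augmentation $\varepsilon$, I would use the chain-level description of the duality due to Sabloff~\cite{S} (equivalently the geometric description of the maps $\sigma,\tau$ from~\cite{EES}): $\sigma_1([\mathrm{pt}])$ is the ``fundamental cocycle'' of $\Lambda$ in $LCH^1_{\varepsilon}(\Lambda)$. Choosing a front with a base point $*$ away from the cusps and crossings and invoking Leverson's theorem $\varepsilon(t)=-1$~\cite{L}, one checks that this cocycle evaluates to $\pm1$ on an explicit degree-$1$ cycle of $(C(\Lambda),\partial^{\varepsilon})$ supported near $*$ (built from a right cusp together with correction terms — every front of a connected knot has at least one right cusp), so $\sigma_1([\mathrm{pt}])$ is non-torsion, hence $\sigma_1$ is injective and $\tau_0=0$. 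In the same model, $\sigma_0([\Lambda])$ is represented by the zero cochain, since the fundamental-class generator of the mapping cone computing the duality is sent to $0$; thus $\sigma_0=0$ and $\tau_1$ is surjective. These are the integral refinements of Sabloff's classical $\Z/2$ statements; the only point requiring care over $\Z$ is the non-torsion assertion, which is exactly why Leverson's theorem is invoked.

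For a pair of non-dga-homotopic augmentations $\varepsilon_1,\varepsilon_2$, I would run the same analysis with the bilinearized duality of Bourgeois--Chantraine~\cite{BC}, whose maps $\sigma_0,\sigma_1$ are twisted by the ordered pair $(\varepsilon_1,\varepsilon_2)$. The key new feature is that the roles of $[\mathrm{pt}]$ and $[\Lambda]$ are exchanged: the chain-level representative of $\sigma_1([\mathrm{pt}])\in LCH^1_{\varepsilon_2,\varepsilon_1}(\Lambda)$ is now a coboundary — the disk count near $*$ that produced the $\pm1$ above is absorbed once the two sides of $*$ are fed by the two different augmentations — so $\sigma_1([\mathrm{pt}])$ is torsion, in fact $0$, giving $\tau_0\neq0$; whereas $\sigma_0([\Lambda])\in LCH^0_{\varepsilon_2,\varepsilon_1}(\Lambda)$ becomes non-torsion, so $\sigma_0$ is injective and $\tau_1=0$. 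It is precisely here that the hypothesis that $\varepsilon_1$ and $\varepsilon_2$ are not dga-homotopic is used: were they homotopic, we would be back in the situation of the previous paragraph, where $\sigma_1([\mathrm{pt}])$ is non-torsion and $\sigma_0([\Lambda])=0$.

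The main obstacle is the last step, and specifically the claim that $\sigma_1([\mathrm{pt}])$ becomes torsion (equivalently that $\sigma_0([\Lambda])$ becomes non-torsion) in the bilinearized setting. This is not a formal consequence of the exact sequence; it forces one to unwind the geometric definition of the bilinearized duality maps of~\cite{BC} and to genuinely exploit the non-existence of an $(\varepsilon_1,\varepsilon_2)$-derivation witnessing a dga-homotopy. The remaining assertions are bookkeeping with the long exact sequence together with the classical one-augmentation facts.
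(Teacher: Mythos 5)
Your reduction of the four assertions to the behaviour of the connecting maps $\sigma_0$ and $\sigma_1$ via exactness is sound, and your treatment of the single-augmentation case is a plausible (if more roundabout) alternative to the paper's: the paper simply observes that $\tau_0$ is given at chain level by $\varepsilon_2-\varepsilon_1$, which is identically zero when $\varepsilon_1=\varepsilon_2$, and that $\sigma_0$ counts the same disks and hence also vanishes, whence $\tau_1$ is surjective by exactness. But the bilinearized half of your argument has a genuine gap, and you say so yourself: the claim that $\sigma_1([\mathrm{pt}])$ is torsion (equivalently that $\tau_0\neq 0$) when $\varepsilon_1$ and $\varepsilon_2$ are not dga-homotopic is precisely the content of the proposition, and you defer it to ``unwinding the geometric definition'' without doing so. The paper's resolution is the chain-level identification $\tau_0=\varepsilon_2-\varepsilon_1$ from~\cite[Proposition~3.2]{BG}: if this vanished on homology, $\varepsilon_2-\varepsilon_1$ would factor through $\partial$ and yield a dga-homotopy, contradicting the hypothesis. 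Moreover, your parenthetical ``(equivalently that $\sigma_0([\Lambda])$ becomes non-torsion)'' is not an equivalence you can read off the exact sequence: $\tau_0$ is defined on $LCH_0^{\varepsilon_1,\varepsilon_2}$ while $\sigma_0$ maps into $LCH^0_{\varepsilon_2,\varepsilon_1}$, so deducing $\ker\sigma_0=0$ (hence $\tau_1=0$) from $\tau_0\neq 0$ requires the additional geometric input, used in the paper via~\cite{EES}, that $\sigma_0$ and $\tau_0$ are defined by counting the same disks; then $m\,\sigma_0([\Lambda])=0$ would force $m\,\tau_0=0$ and hence $\tau_0=0$, since $H_0(\Lambda)=\Z$ is torsion-free.

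A further concrete error: your assertion that $\sigma_1([\mathrm{pt}])$ is ``in fact $0$'' in the bilinearized case is false in general. By exactness it would make $\tau_0$ surjective, whereas Example~\ref{ex:tau0-bilin} produces non-dga-homotopic augmentations with $\mathrm{im}(\tau_0)=n\Z\subsetneq\Z$, i.e.\ $\sigma_1([\mathrm{pt}])$ is a nonzero $n$-torsion class. Only the weaker statement that $\sigma_1([\mathrm{pt}])$ is torsion is true, and that is exactly what remains to be proved.
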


Note that in the case of bilinearized LCH with $\varepsilon_1$ and $\varepsilon_2$ not dga-homotopic to each other, the map $\tau_0$ is not
necessarily surjective. This will be illustrated by Example~\ref{ex:tau0-bilin}.

\begin{proof}
The map $\tau_0$ was shown in~\cite[Proposition~3.2]{BG} to be given at chain level by $\varepsilon_2 - \varepsilon_1$, with coefficients in $\Z_2$. Let us 
carefully check that these signs are correct when working with coefficients in $\Z$.
From the point of view of the front projection, these two terms count immersed disks with boundary on the $2$-copy of $\Lambda$, with a positive puncture of 
grading $0$ at a crossing $c$ between the lower copy $\Lambda_-$ of $\Lambda$ to the upper copy $\Lambda_+$ of $\Lambda$, a negative puncture 
of grading $0$ at a crossing of either $\Lambda_+$ or $\Lambda_-$ very close to $c$, and a negative puncture of grading $-1$ at a crossing 
located near corresponding left cusps of $\Lambda_+$ and $\Lambda_-$. Since the second negative puncture has odd degree, it does not contribute to the
sign of the disk. The first negative puncture corresponds to a downward-facing quadrant (hence contributing a negative sign) if it corresponds to a crossing 
of $\Lambda_+$, and in this case this crossing comes just after $c$ on the oriented boundary of the disk, which therefore contributes $-\varepsilon_1$.
On the other hand, the first negative puncture corresponds to an upward-facing quadrant (hence contributing a positive sign) if it corresponds to a crossing 
of $\Lambda_-$, and in this case this crossing comes just before $c$ on the oriented boundary of the disk, which therefore contributes $+\varepsilon_2$.

If $\varepsilon_1$ and $\varepsilon_2$ are dga-homotopic, the above shows that $\tau_0$ vanishes in homology, as desired. Moreover, the definition of
the map $\sigma_k$ in~\cite{EES} shows that it is based on the count of a collection of disks that coincides with those counted by the map $\tau_k$. Therefore,
the map $\sigma_0$ vanishes as well. By exactness of~\eqref{eq:dualityseq-lin}, it follows that $\tau_1$ is surjective as desired.

If $\varepsilon_1$ and $\varepsilon_2$ are not dga-homotopic, the end of the proof of~\cite[Proposition~3.2]{BG} applies verbatim to show that the map
$\tau_0$ does not vanish. The image of $\tau_0$ is therefore a nontrivial $\Z$-submodule of $H_0(\Lambda) = \Z$, hence a free $\Z$-module.
Since $\tau_0$ and $\sigma_0$ count the same objects~\cite{EES}, this implies in turn that the map $\sigma_0$ defined on the free $\Z$-module 
$H_1(\Lambda) = \Z$ has a trivial kernel. By exactness of~\eqref{eq:dualityseq-bilin}, it follows that $\tau_1$ vanishes as desired.
\end{proof}

Note that the behavior of the maps $\tau_k$ was described in \cite{St} in the case of Legendrian links, but this will not be needed here as our main
results apply to Legendrian knots only.

\begin{Cor}\label{cor:torsion_iso}Let $\Lambda$ be a Legendrian knot, and let $\varepsilon_1,\varepsilon_2$ be two $\Z$-valued augmentations of its DGA. We denote the torsion part of $LCH^{\varepsilon_1,\varepsilon_2}_k(\Lambda)$ as $T_k(\Lambda,\varepsilon_1,\varepsilon_2)$. Then if either $\varepsilon_1\sim\varepsilon_2$, or $\tau_0$ is surjective, then $\forall k\in\Z$, $T_k(\Lambda, \varepsilon_1,\varepsilon_2)\cong T_{-k-1}(\Lambda, \varepsilon_2,\varepsilon_1)$. 
\end{Cor}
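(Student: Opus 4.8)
The plan is to extract the statement from the duality long exact sequence \eqref{eq:dualityseq-bilin} by splitting off the torsion. First I would observe that since $H_k(\Lambda)$ is free for all $k$ (it is $\Z$ for $k=0,1$ and $0$ otherwise), the map $\sigma_{n-k}\colon H_k(\Lambda)\to LCH^{n-k}_{\varepsilon_2,\varepsilon_1}(\Lambda)$ interacts with the torsion subgroup in a controlled way: the composite $LCH^{n-k-1}_{\varepsilon_2,\varepsilon_1}(\Lambda)\to LCH^{\varepsilon_1,\varepsilon_2}_k(\Lambda)$ in \eqref{eq:dualityseq-bilin} has image equal to $\ker\tau_k$, and one wants to see that this induces an isomorphism on torsion subgroups. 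Concretely, I would break \eqref{eq:dualityseq-bilin} into short exact sequences
\[
0 \to \operatorname{coker}(\sigma_{n-k+1}) \to LCH^{\varepsilon_1,\varepsilon_2}_k(\Lambda) \to \ker(\sigma_{n-k}) \to 0,
\]
where $\sigma_{n-k}$ is the map out of $H_k(\Lambda)$ and $\sigma_{n-k+1}$ the map out of $H_{k-1}(\Lambda)$ (cohomological indexing matched to homological via $LCH^j \cong LCH_{-j}$ as in the sequence). Since $H_{k-1}(\Lambda)$ is free, $\operatorname{coker}(\sigma_{n-k+1})$ is a quotient of $LCH^{n-k+1}_{\varepsilon_2,\varepsilon_1}(\Lambda) = LCH_{k-1-n}^{\varepsilon_2,\varepsilon_1}(\Lambda)$ by the image of a free module; and since $\ker(\sigma_{n-k})$ is a submodule of the free module $H_k(\Lambda)$, it is itself free. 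Hence the torsion subgroup of $LCH^{\varepsilon_1,\varepsilon_2}_k(\Lambda)$ equals the torsion subgroup of $\operatorname{coker}(\sigma_{n-k+1})$.

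Next I would use the hypothesis — either $\varepsilon_1\sim\varepsilon_2$ or $\tau_0$ surjective — to control the free part of the image of $\sigma$. The point is that $\sigma$ is injective on the relevant summand: in the linearized case ($\varepsilon_1\sim\varepsilon_2$), Proposition~\ref{prop:tau} gives that $\sigma_0$ has image $0$ in the degree where $\tau_1$ is surjective and the only nonzero $H_k$ are in degrees $0,1$, so $\operatorname{coker}(\sigma_{n-k+1})$ is the full group $LCH^{n-k+1}_{\varepsilon_2,\varepsilon_1}(\Lambda)$ away from the special degrees, and in the special degrees one checks directly that quotienting by the image of a rank-$\le 1$ free map does not change the torsion subgroup (quotienting a f.g. abelian group by a primitive, or more generally a torsion-free, cyclic subgroup leaves the torsion subgroup unchanged — here one uses that $\sigma$ lands, modulo torsion, either as zero or as a direct summand, which follows from $\tau$ being surjective onto $H_k$ after the appropriate identifications). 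In the bilinearized case with $\tau_0$ surjective, the dual statement from Proposition~\ref{prop:tau} (combined with the fact, used in its proof, that $\tau_k$ and $\sigma_k$ count the same disks) shows $\sigma_0$ is injective with free image $\Z\subseteq LCH^{n}_{\varepsilon_2,\varepsilon_1}(\Lambda)$ actually a direct summand, hence again killing it does not affect torsion. Assembling: $T_k(\Lambda,\varepsilon_1,\varepsilon_2) \cong \text{(torsion of } LCH^{n-k+1}_{\varepsilon_2,\varepsilon_1}(\Lambda)) = T_{n-k+1-n}(\Lambda,\varepsilon_2,\varepsilon_1)$; it remains to match this with $T_{-k-1}$, which I expect to come from the standard normalization $n=1$ for Legendrian knots in $\R^3$ (then $n-k+1 = -k$... so one must be careful, and the correct bookkeeping gives $-k-1$ once the cohomological-to-homological reindexing $LCH^j = LCH_{-j}$ is applied consistently throughout \eqref{eq:dualityseq-bilin}).

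The main obstacle I anticipate is precisely this last index-chasing together with the claim that the image of $\sigma$ (which a priori is only a torsion-free subgroup of the free group $H_k(\Lambda)=\Z$, hence automatically a direct summand since $\Z$ is a PID) can be split off without disturbing the torsion of the ambient group: one needs that a short exact sequence $0\to A \to B \to C\to 0$ with $C$ free splits, so $B \cong A \oplus C$ and $T(B) = T(A)$, and then that $A = \operatorname{coker}(\sigma)$ has torsion part equal to the torsion part of the group it is a quotient of by a free (equivalently, by Proposition~\ref{prop:tau}, either trivial or direct-summand) subgroup. Each of these is elementary homological algebra over $\Z$, but keeping the two sequences \eqref{eq:dualityseq-lin}/\eqref{eq:dualityseq-bilin}, the grading conventions $\overline{M}$, $M[s]$, and the swap $\varepsilon_1\leftrightarrow\varepsilon_2$ all aligned is where the real care is required; I would do this degree by degree using that $H_\ast(\Lambda)$ is concentrated in degrees $0$ and $1$, so only finitely many degrees need individual attention and the rest follow formally.
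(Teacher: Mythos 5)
Your overall strategy is the paper's: feed the duality sequence \eqref{eq:dualityseq-bilin} through the Universal Coefficient Theorem (which identifies the torsion of $LCH^{j}_{\varepsilon_2,\varepsilon_1}(\Lambda)$ with $T_{j-1}(\Lambda,\varepsilon_2,\varepsilon_1)$) and use Proposition~\ref{prop:tau} in the degrees where $H_*(\Lambda)$ is nonzero. Away from $k\in\{-1,0,1\}$ this is immediate, and your splitting into short exact sequences is a clean way to say it --- though note the indexing: the subobject of $LCH^{\varepsilon_1,\varepsilon_2}_k(\Lambda)$ is $\operatorname{coker}\bigl(\sigma_{n-k-1}\colon H_{k+1}(\Lambda)\to LCH^{n-k-1}_{\varepsilon_2,\varepsilon_1}(\Lambda)\bigr)$, i.e.\ the cokernel of the map out of $H_{k+1}$, not out of $H_{k-1}$; with $n=1$ this term is $LCH^{-k}_{\varepsilon_2,\varepsilon_1}(\Lambda)$, whose torsion is exactly $T_{-k-1}(\Lambda,\varepsilon_2,\varepsilon_1)$, so once corrected the bookkeeping does land on $-k-1$ without further fuss.

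The genuine gap is in the special degrees. Your stated lemma --- that quotienting a finitely generated abelian group by a torsion-free cyclic subgroup leaves the torsion subgroup unchanged --- is false ($\Z/2\Z$ is the quotient of $\Z$ by the torsion-free subgroup $2\Z$), and the corrected version you gesture at (that $\operatorname{im}\sigma$ is a direct summand modulo torsion) is precisely what needs proof in the one special degree where $\sigma$ is injective; a cyclic subgroup of a non-free group is not automatically a summand, and your parenthetical about subgroups of $H_k(\Lambda)=\Z$ conflates $\ker\sigma$ (which does live in $H_k(\Lambda)$, is free, and makes the sequence split) with $\operatorname{im}\sigma$ (which lives in $LCH^{n-k}_{\varepsilon_2,\varepsilon_1}(\Lambda)$ and need not be a summand). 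The way out, which is what the paper does, is to observe that the hypotheses kill $\sigma$ outright in one of the two remaining degrees: in the linearized case surjectivity of $\tau_1$ forces $\sigma_0=0$, which settles $k=0$; in the bilinearized case surjectivity of $\tau_0$ forces $\sigma_1=0$, which settles $k=-1$. The other special degree is then not handled by analyzing an injective $\sigma$ at all --- showing its image is primitive modulo torsion is essentially equivalent to the statement being proved --- but by applying the already-established instance with $k$ replaced by $-k-1$ and $\varepsilon_1,\varepsilon_2$ swapped. If you add that symmetry step and delete the false quotient lemma, your argument becomes the paper's.
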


\begin{proof} The fact that for any $k\neq-1,0,1$, $LCH^{\varepsilon_1,\varepsilon_2}_k(\Lambda)\cong LCH^{-k}_{\varepsilon_2,\varepsilon_1}(\Lambda)$ is a direct consequence of the long exact sequence \eqref{eq:dualityseq-bilin}, and the fact that $H_k(\Lambda)=0$ for $k\neq0,1$. Since for any $k$, by the Universal Coefficient Theorem, the torsion part of $LCH^{k}_{\varepsilon_1,\varepsilon_2}(\Lambda)$ is isomorphic to the torsion part of $LCH_{k-1}^{\varepsilon_1,\varepsilon_2}(\Lambda)$, this then gives us $T_k(\Lambda, \varepsilon_1,\varepsilon_2)\cong T_{-k-1}(\Lambda, \varepsilon_2,\varepsilon_1)$ for any $k\neq-1,0$ (the case where $k=1$ is paired with the $k=-2$ case).

Suppose now that $\Lambda$ is connected, and that $\varepsilon_1$ and $\varepsilon_2$ are dga-homotopic. We may then write the homology as $LCH_k^{\varepsilon_1}$, as it is isomorphic to the linearized homology. We want to show that $T_0(\Lambda,\varepsilon_1,\varepsilon_2)\cong T_{-1}(\Lambda,\varepsilon_1,\varepsilon_2)$ (in this case the orders of the augmentations do not matter). The long exact sequence reads
 \begin{align*}
\lefteqn{\ldots \rightarrow LCH_{1}^{\varepsilon_1}(\Lambda) \overset{\tau_1}\rightarrow H_1(\Lambda) = \Z \overset{\psi_0}\rightarrow LCH^{0}_{\varepsilon_1}(\Lambda)} \\
& \hspace{3cm} \overset{\phi_0}\rightarrow LCH_{0}^{\varepsilon_1}(\Lambda) \newline \overset{\tau_0}\rightarrow H_{0}(\Lambda) = \Z\rightarrow \ldots
\end{align*}
Proposition \ref{prop:tau} tells us that $\tau_0$ vanishes, whereas $\tau_1$ is surjective. Since $\tau_0=0$, $\phi_0$ is surjective, and since $\tau_1$ is surjective, $\psi_0=0$, and so $\phi_0$ is injective. And so $LCH_{0}^{\varepsilon_1}(\Lambda)\cong LCH^{0}_{\varepsilon_1}(\Lambda)$. In particular, in this case $T_0(\Lambda, \varepsilon_1,\varepsilon_2)$ is isomorphic to the torsion part of $LCH^{0}_{\varepsilon_1}(\Lambda)$, which is isomorphic to $T_{-1}(\Lambda,\varepsilon_1,\varepsilon_2)$. 

Let us now consider the case where $\varepsilon_1$ and $\varepsilon_2$ are not dga-homotopic. Then Proposition \ref{prop:tau} tells us that $\tau_1$ vanishes, which gives us $LCH^{\varepsilon_1,\varepsilon_2}_1(\Lambda)\cong LCH^{-1}_{\varepsilon_2,\varepsilon_1}(\Lambda)$. Furthermore, the long exact sequence reads 
 \begin{align*}
\lefteqn{\ldots \overset{\tau_1}\rightarrow H_1(\Lambda) = \Z \overset{\psi_0}\rightarrow LCH^{0}_{\varepsilon_2,\varepsilon_1}(\Lambda)  \overset{\phi_0}\rightarrow LCH_{0}^{\varepsilon_1,\varepsilon_2}(\Lambda)} \\
& \hspace{3cm} \overset{\tau_0}\rightarrow H_{0}(\Lambda) = \Z\overset{\psi_{-1}}\rightarrow LCH^{1}_{\varepsilon_2,\varepsilon_1}(\Lambda)  \overset{\phi_{-1}}\rightarrow LCH_{-1}^{\varepsilon_1,\varepsilon_2}(\Lambda)\rightarrow0.
\end{align*}
Then if $\tau_0$ is surjective, then $\psi_{-1}$ must vanish, and so $\psi_{-1}$ must be injective. It is also clearly surjective due to $H_{-1}(\Lambda)$ being trivial, and so we have an isomorphism. $T_{-1}(\Lambda,\varepsilon_1,\varepsilon_2)$ must then be isomorphic to the torsion part of $LCH^{1}_{\varepsilon_2,\varepsilon_1}(\Lambda) $, which is isomorphic to $T_{0}(\Lambda,\varepsilon_2,\varepsilon_1)$.
\end{proof}

\subsection{Connected sums}
\label{sec:connsum}

Assume now that the Legendrian submanifold $\Lambda$ consists of several connected components $\Lambda_1, \Lambda_2, \ldots, \Lambda_r$.
We now describe a connected sum operation on this Legendrian link affecting two of these connected components, say $\Lambda_1$ and $\Lambda_2$.
After a Legendrian isotopy, we can assume that there exists a rectangular region $R$ in the $xz$-plane intersecting the front projection of $\Lambda$ in
exactly two horizontal line segments, the upper one in $\Lambda_1$ and the lower one in $\Lambda_2$, with the same Maslov potential. The connected sum
operations consists in the following modification of $\Lambda_1$ and $\Lambda_2$ inside $R$: the two line segments are replaced with a pair of smooth curves
approximating line segments, joining respectively the left end of the upper segment to the right end of the lower segment, and the left end of the lower segment 
to the right end of the upper segment. This is illustrated by Figure~\ref{fig:connsum}. Note that since the Maslov potential of the two original line segments coincide,
we have an induced Maslov potential of the resulting Legendrian submanifold $\widetilde{\Lambda}$.
Moreover, in this operation, a single new crossing $c$ is created, at the intersection point of the two new smooth curves. As their Maslov 
potential coincide, this crossing $c$ has grading $0$. If, after the connected sum, there does not exist any new holomorphic disks such that the boundary goes along one of these new smooth curves which passes through $c$ along $\Lambda$, without a corner at $c$, then any augmentation $\varepsilon$ of $\Lambda$ induces an augmentation $\widetilde{\varepsilon}$ of
$\widetilde{\Lambda}$ by setting $\widetilde{\varepsilon}(c) = -1$. Indeed, in this case any disk which contributes to the differential of the connected sum corresponds either exactly to a disk which existed before the connected sum, or to a disk which existed before with one additional puncture at $c$. 
So the differential is now of the form
\begin{align*}
\partial(a) = r(a) + &\sum_{\substack{n \ge 0, b_1, \ldots, b_n \\ u \in \Delta(a;b_1, \ldots, b_n)}} s(u) t(\eta_0) c(b_1) t(\eta_1) c(b_2) \ldots c(b_n) t(\eta_n)\\ 
&+\sum_{\substack{n \ge 0, b_1, \ldots, b_n \\ u \in \Delta(a;b_1, \ldots, c, \ldots, b_n)}} s(u) t(\eta_0) c(b_1) t(\eta_1) c(b_2) \ldots c \ldots c(b_n) t(\eta_n),
\end{align*} and the sign of $u$, when $u$ has a corner at $c$, is the same as the sign before the connected sum if the corner in $c$ points downwards, and the opposite if the corner points upwards. The connected sum removes the base point $*_2$ on $\Lambda_2$, which we may assume is in the area of $\Lambda_2$ bounded by $R$. The fact that $\varepsilon\circ\partial=0$ then implies that, for $\widetilde{\varepsilon}(c) = -1$, we have  $\widetilde{\varepsilon}\circ\partial = 0$, with the negative sign compensating for the loss of the negative sign given by the evaluation at $*_2$. Note that this argument still works if there does exist disks of which the boundary goes along one of the curves which passes through $c$ without turning in $c$, if the disk also turns in any crossing $b$ such that $\varepsilon(b)=0$.  

\begin{figure}
\labellist
\small\hair 2pt
\pinlabel {$\Lambda_1$} [bl] at 10 55
\pinlabel {$\Lambda_2$} [bl] at 10 3
\pinlabel {$c$} [bl] at 257 42
\endlabellist
  \centerline{\includegraphics[width=7cm]{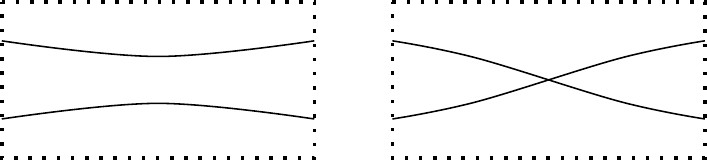}}
  \caption{Connected sum of two Legendrian knots.}
  \label{fig:connsum}
\end{figure}

The effect of this type of operation on bilinearized LCH was described in all dimensions with $\Z_2$-coefficients in~\cite[Proposition~3.5]{BG}. We now upgrade
this result to $\Z$-coefficients in the case of $1$-dimensional Legendrian submanifolds.

Consider the map $\tau_1: LCH^{\varepsilon_1, \varepsilon_2}_1(\Lambda) \to H_1(\Lambda)$ from the duality exact sequence~\eqref{eq:dualityseq-bilin}.
Since $H_1(\Lambda)$ is spanned by the fundamental classes $[\Lambda_i], i = 1, \ldots, r$ of the connected components of $\Lambda$, this map can
be written as $\tau_1 = \sum_{i=1}^r \tau_{1,i} [ \Lambda_i]$, using maps $\tau_{1,i}: LCH^{\varepsilon_1, \varepsilon_2}_1(\Lambda) \to \Z$.

\begin{Prop} \label{prop:connsum}
Let $\Lambda$ be a Legendrian submanifold in $\R^3$ equipped with two augmentations $\varepsilon_1$ and $\varepsilon_2$. Let $\widetilde{\Lambda}$ be the Legendrian submanifold obtained by performing a connected sum between two connected components $\Lambda_1$ and $\Lambda_2$ of $\Lambda$, and suppose that $\varepsilon_1$ and $\varepsilon_2$ extend to augmentations 
$\widetilde{\varepsilon}_1$ and $\widetilde{\varepsilon}_2$ for $\widetilde{\Lambda}$ by sending the new generator to $-1$.
Then if the map $\tau_{1,1} - \tau_{1,2}$ is surjective, then $LCH^{\widetilde{\varepsilon}_1, \widetilde{\varepsilon}_2}(\widetilde{\Lambda})$ and
$LCH^{\varepsilon_1, \varepsilon_2}(\Lambda)$ have the same torsion, but the latter has an additional free generator in degree $1$. If the map $\tau_{1,1} - \tau_{1,2}$ vanishes, then $LCH^{\widetilde{\varepsilon}_1, \widetilde{\varepsilon}_2}(\widetilde{\Lambda})$ and
$LCH^{\varepsilon_1, \varepsilon_2}(\Lambda)$ have the same torsion, but the former has an additional free generator in degree $0$.
\end{Prop}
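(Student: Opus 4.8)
The plan is to compare the bilinearized complexes $(C(\widetilde\Lambda),\partial^{\widetilde\varepsilon_1,\widetilde\varepsilon_2})$ and $(C(\Lambda),\partial^{\varepsilon_1,\varepsilon_2})$ directly at chain level, using the description of $\partial^{\widetilde\varepsilon_1,\widetilde\varepsilon_2}$ furnished by the connected-sum discussion above. First I would observe that $C(\widetilde\Lambda) = C(\Lambda)\oplus\Z\langle c\rangle$ as a graded $\Z$-module, with $c$ in degree $0$; every old generator persists, and exactly one new degree-$0$ generator is created. Since $\widetilde\varepsilon_i(c) = -1$, the extra disks with a corner at $c$ contribute, after evaluating the augmentations, the same count as the corresponding disk without that corner (up to the sign rule stated before the proposition), so the bilinearized differential on old generators is unchanged modulo correction terms landing in $\Z\langle c\rangle$. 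Concretely I expect $\partial^{\widetilde\varepsilon_1,\widetilde\varepsilon_2}$ to have the block form: on $C(\Lambda)$ it agrees with $\partial^{\varepsilon_1,\varepsilon_2}$ plus a map $C(\Lambda)\to\Z\langle c\rangle$ recording disks that acquire a corner at $c$, and $\partial^{\widetilde\varepsilon_1,\widetilde\varepsilon_2}(c) = 0$ (as $c$ has minimal relevant degree in that local picture and no new disk has a single positive puncture at $c$ feeding into lower degree). The key point, which I would extract from the proof of~\cite[Proposition~3.5]{BG} transported to $\Z$-coefficients via the sign computation already carried out in the proof of Proposition~\ref{prop:tau}, is that this correction map $C(\Lambda)\to\Z\langle c\rangle$ is, up to sign, exactly the chain-level representative of $\tau_{1,1}-\tau_{1,2}$ — that is, $c$ sits in the role of the degree-$0$ generator dual to the difference of fundamental classes $[\Lambda_1]-[\Lambda_2]$ that gets merged into a single class $[\widetilde\Lambda_1]$ by the connected sum.

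With this chain model in hand, the proof splits into the two stated cases according to the behavior of the map $\widehat\tau := \tau_{1,1}-\tau_{1,2}\colon LCH^{\varepsilon_1,\varepsilon_2}_1(\Lambda)\to\Z$. If $\widehat\tau$ is surjective, I would pick a cycle $z\in C_1(\Lambda)$ whose class maps to $1$ under $\widehat\tau$; then in $C(\widetilde\Lambda)$ we have $\partial^{\widetilde\varepsilon_1,\widetilde\varepsilon_2} z = \pm c$ (after possibly adjusting $z$ by a boundary), so $c$ becomes a boundary and $z$ is no longer a cycle. The pair $(z,c)$ spans an acyclic subcomplex, and $(C(\widetilde\Lambda),\partial^{\widetilde\varepsilon_1,\widetilde\varepsilon_2})$ deformation retracts onto a complex isomorphic to $(C(\Lambda),\partial^{\varepsilon_1,\varepsilon_2})$ with the class $[z]$ killed. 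Since $[z]$ generates a free rank-$1$ summand in degree $1$ (its image under $\widehat\tau$ is $1$, so it has infinite order and splits off), we conclude $LCH^{\widetilde\varepsilon_1,\widetilde\varepsilon_2}(\widetilde\Lambda)\simeq LCH^{\varepsilon_1,\varepsilon_2}(\Lambda)$ with one degree-$1$ free generator removed; in particular the torsion is unchanged, giving the first assertion. Here I would invoke the algebraic fact that splitting off a free $\Z$-summand from a finitely generated graded $\Z$-module does not affect its torsion part, which is immediate from the structure theorem.

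If instead $\widehat\tau$ vanishes, then the correction map $C_1(\Lambda)\to\Z\langle c\rangle$ is zero on cycles, hence (after a chain homotopy absorbing it on the non-cycle part, which exists precisely because the map is null on homology) the differential on $C(\widetilde\Lambda)$ becomes the direct sum of $\partial^{\varepsilon_1,\varepsilon_2}$ on $C(\Lambda)$ and the zero differential on $\Z\langle c\rangle$; one must check that no differential hits $c$ either, which follows since any cycle mapping to $c$ would witness surjectivity of $\widehat\tau$. Thus $LCH^{\widetilde\varepsilon_1,\widetilde\varepsilon_2}(\widetilde\Lambda)\simeq LCH^{\varepsilon_1,\varepsilon_2}(\Lambda)\oplus\Z[0]$, again with torsion unchanged and now one extra degree-$0$ free generator, which is the second assertion. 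I expect the main obstacle to be the careful bookkeeping of signs in identifying the correction map with $\widehat\tau$ over $\Z$ rather than $\Z_2$: one has to track, for each new disk with a corner at $c$, whether that corner points up or down (governing the sign twist described before the proposition) and reconcile it with the orientation conventions entering the definition of $\tau_1$ via the duality sequence~\eqref{eq:dualityseq-bilin} — essentially the same delicate analysis already performed in the proof of Proposition~\ref{prop:tau}, which I would cite and adapt rather than redo from scratch.
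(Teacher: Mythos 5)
Your proposal is correct and follows essentially the same route as the paper: both arguments hinge on identifying the component of $\partial^{\widetilde{\varepsilon}_1,\widetilde{\varepsilon}_2}$ landing on $c$ with a chain-level representative of $\tau_{1,1}-\tau_{1,2}$ via the disk counts defining $\tau_1$, and then performing elementary homological algebra on the rank-one piece $\Z\langle c\rangle$. The only cosmetic difference is that the paper organizes this algebra as the long exact sequence of the short exact sequence of complexes $0\to\Z\langle c\rangle\to C(\widetilde{\Lambda})\to C(\Lambda)\to 0$ (using that the Reeb chord at $c$ can be made short, so $\Z\langle c\rangle$ is a subcomplex), whereas you perform an equivalent basis change and cancellation by hand.
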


\begin{proof}
Note that we can make the length of the Reeb chord corresponding to $c$ as small as we want, so that the graded 
$\Z$-module $\Z\langle c \rangle \simeq \Z[0]$ spanned by this crossing is a subcomplex of the (bi)linearized complex 
$C(\widetilde{\Lambda})$ for LCH. The quotient complex is naturally identified with the original complex
$C(\Lambda)$, so that we have a short exact sequence of complexes, inducing in homology the exact sequence
\begin{align*}
\lefteqn{0 \to LCH^{\widetilde{\varepsilon}_1, \widetilde{\varepsilon}_2}_1(\widetilde{\Lambda}) \to 
LCH^{\varepsilon_1, \varepsilon_2}_1(\Lambda) \stackrel{\rho_1}{\longrightarrow} \Z\langle c \rangle}  \\
& \hspace{3cm} \to LCH^{\widetilde{\varepsilon}_1, \widetilde{\varepsilon}_2}_0(\widetilde{\Lambda}) \to 
LCH^{\varepsilon_1, \varepsilon_2}_0(\Lambda) \to 0,
\end{align*}
as well as the isomorphisms $LCH^{\varepsilon_1, \varepsilon_2}_k(\widetilde{\Lambda}) \simeq LCH^{\varepsilon_1, \varepsilon_2}_k(\Lambda)$ for
$k \neq 0, 1$.
We claim that $\rho_1 = (\tau_{1,1} - \tau_{1,2})c$ and note that this Proposition directly follows from this claim.

Indeed, the map $\rho_1$ corresponds to the part of the bilinearized differential on $\widetilde{\Lambda}$ which goes to $c$. On the other hand, as explained in \cite[Section 3.2.5]{BC}, the map $\tau_1$ in the duality exact sequence for a link $\Lambda$ counts disks which have a left-facing corner at a generator $b$, and a marked point on the boundary which is mapped to a generic point $p_j$ of a connected component $\Lambda_j$ of $\Lambda$. These disks may also have downwards-facing corners, which we denote $b_1,…,b_k$ for those appearing before $p_j$ on the boundary of the disk, and $b_{k+1},…,b_r$ for those appearing after. If we call $\Delta(b;b_1,…,b_k,p_j,b_{k+1},…,b_r)$ the moduli space of such disks, then the map is given by
$$\tau_1(b)=\sum_j\sum_{\substack{r \ge 0, b_1, \ldots, b_r \\ u \in \Delta(b;b_1, …,b_k,p_j,b_{k+1},…,b_r)}} \varepsilon_1(b_1)…\varepsilon_1(b_k)\varepsilon_2(b_{k+1})…\varepsilon_2(b_r)[\Lambda_j].$$
Hence, due to the fact that $\varepsilon_i$ and $\widetilde{\varepsilon_i}$ match for all generators which are not $c$, the coefficient in front of $c$ in the bilinearized differential on $\widetilde{\Lambda}$ corresponds exactly to the difference between $\tau_{1,1}$ and $\tau_{1,2}$: disks which contribute to $\tau_{1,1}$ correspond to disks which cover the upward-facing quadrant of the crossing $c$, whereas disks which contribute to $\tau_{1,2}$ correspond to disks which cover the downward-facing quadrant of the crossing $c$, which adds a negative sign. This gives us the desired result $\rho_1 = (\tau_{1,1} - \tau_{1,2})c$. 
\end{proof}

%%%%%%%%%%%%%%%%%%%%%%%%%%%%%%%%%%%%%%%%%%%%%%%%%%%%%%%%%%%
\section{Torsion in linearized Legendrian contact homology} \label{sec:torsion_lin}
%%%%%%%%%%%%%%%%%%%%%%%%%%%%%%%%%%%%%%%%%%%%%%%%%%%%%%%%%%%

\subsection{Constraints on the linearized LCH module}
\label{sec:constraints_lin}

To understand the behavior of torsion in linearized contact homology, we must make sure that its properties are compatible with the duality long exact sequence \eqref{eq:dualityseq-lin}. Note that Corollary~\ref{cor:torsion_iso} implies that the torsion part of linearized Legendrian contact homology has the form $T \oplus \overline{T}[1]$
for some finitely generated graded torsion $\Z$-module $T$.

On the other hand, the constraints on the free part of linearized contact homology were described in~\cite[Theorem~1.1]{BST} for the case of 
generating family homology. The latter satisfies a duality exact sequence identical to~\eqref{eq:dualityseq-lin}, which is valid for any coefficient ring, and 
in particular for rational coefficients. The dimension of linearized contact homology with such coefficients gives the rank of its free part, as desired.
Let us denote by $r_k$ the rank of the free part of $LCH_{k}^{\varepsilon}(\Lambda)$. Then these constraints can be written as $r_k = r_{-k}$ for
$|k| > 1$ and $r_1 = 1 + r_{-1}$. This result was already established in~\cite{S} for $\Z_2$ coefficients, and the corresponding proof can be 
adapted to rational coefficients by adding to it the sign rules for the LCH differential. Note that since the Thurston-Bennequin invariant of $\Lambda$, 
which is the Euler characteristics of $LCH_{*}^{\varepsilon}(\Lambda)$, is always an odd integer for a $1$ dimensional Legendrian knot $\Lambda$, 
these constraints also imply that $r_0$ is even. These constraints imply that the free part of linearized contact homology has the form $\Z[-1] \oplus F
\oplus \overline{F}$ for some finitely generated graded free $\Z$-module $F$.

\subsection{Construction of torsion in linearized LCH} 
\label{sec:torsionlin}

Consider the Legendrian trefoil knot $\Lambda_{\rm tr}$ represented by Figure~\ref{fig:trefle}.
We have five generators, $a_1$, $a_2$, $b_1$, $b_2$, and $b_3$. The gradings are as follows:
\begin{gather*} |a_1| = |a_2| = 1,\\
\newline |b_1| = |b_2| = |b_3| = 0.\end{gather*}

\begin{figure}
\labellist
\small\hair 2pt
\pinlabel {$b_1$} [bl] at 275 410
\pinlabel {$b_2$} [bl] at 490 400
\pinlabel {$b_3$} [bl] at 730 380
\pinlabel {$a_1$} [bl] at 1050 540
\pinlabel {$a_2$} [bl] at 1050 260
\pinlabel{$\bullet$} [bl] at 560 60
\endlabellist
  \centerline{\includegraphics[width=7cm]{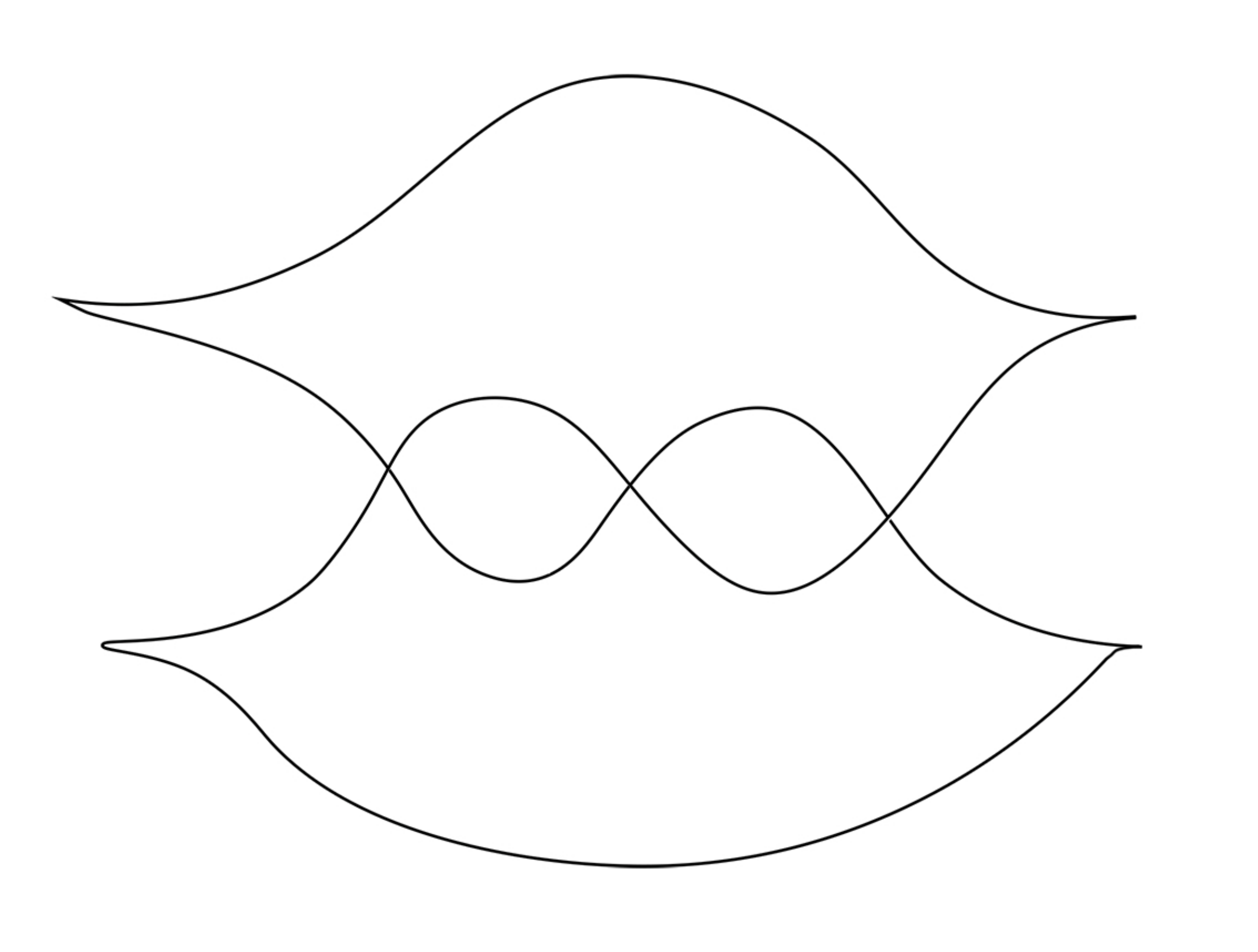}}
  \caption{Front projection of the trefoil knot.}
  \label{fig:trefle}
\end{figure}

To choose an augmentation in $\Z$, we must compute the differential on the Chekanov-Eliashberg DGA $\mathcal{A}_{\Lambda_{\rm tr}}$. We have:
\begin{align*}\partial b_1 &= \partial b_2 = \partial b_3 = 0, \\
\partial a_1 &= 1 + b_1 + b_3 + b_1b_2b_3,\\
\partial a_2 &= 1 -b_1 t - b_3  t- b_3b_2b_1 t.\end{align*}

Since $a_1$ and $a_2$ are of non-zero grading, they must be augmented to 0. Also, since $\varepsilon\circ\partial$ must equal 0, we see that for any choice of augmentation for this DGA, $t$ must be augmented to $-1$, which is compatible with the result by Leverson \cite{L}. Such an augmentation $\varepsilon$ must therefore satisfy $$\varepsilon(b_1) + \varepsilon(b_3) + \varepsilon(b_1)\varepsilon(b_2)\varepsilon(b_3) = -1.$$   

If we take $\varepsilon(b_1)=-1$ and $\varepsilon(b_3)=0$, we can choose any value for $\varepsilon(b_2)$. 
For $n \in \mathbb{N}$, let us set consider the augmentation $\varepsilon_n$ defined by $\varepsilon(b_1)=-1$, 
$\varepsilon(b_2)=n\in\mathbb{N}$ and $\varepsilon(b_3)=0$.

With this choice of augmentation, the linearized differentials are:
\begin{align*}\partial^{\varepsilon_n} b_1 &= \partial^{\varepsilon_n} b_2 = \partial^{\varepsilon_n} b_3 = 0, \\
\partial^{\varepsilon_n} a_1 &= \partial^{\varepsilon_n} a_2= b_1 + b_3 -nb_3.\end{align*}

 We therefore have two homology generators in degrees 0, $[b_3]$ and $[b_2]$, and one in degree 1, $[a_1-a_2]$. 
 In particular, the corresponding linearized LCH is given by
 $$
 LCH^{\varepsilon_n}(\Lambda_{\rm tr}) = \Z[-1] \oplus \Z^2[0].
 $$

We now proceed to intertwine a Legendrian unknot $\Lambda_{1,k}$ with our trefoil knot, as seen in Figure \ref{fig:trefle_trivial}, such that there is a shift of $k \in\Z$ between the Maslov potential of $\Lambda_{\rm tr}$ and $\Lambda_{1,k}$. We extend $\varepsilon_n$ to 0 on all new generators. In consequence of this, the previously computed differentials do not change. 

\begin{figure}[h]
\labellist
\small\hair 2pt
\pinlabel {$b_1$} [bl] at 275 410
\pinlabel {$b_2$} [bl] at 490 400
\pinlabel {$b_3$} [bl] at 830 380
\pinlabel {$a_1$} [bl] at 1050 540
\pinlabel {$a_2$} [bl] at 1050 260
\pinlabel{$\bullet$} [bl] at 560 60
\pinlabel {$a_3$} [bl] at 950 25
\pinlabel {$c_1$} [bl] at 750 130
\pinlabel {$c_2$} [bl] at 720 280
\pinlabel {$d_1$} [bl] at 470 100
\pinlabel {$d_2$} [bl] at 600 285
\pinlabel{$\bullet$} [bl] at 700 00
\endlabellist
  \centerline{\includegraphics[width=7cm]{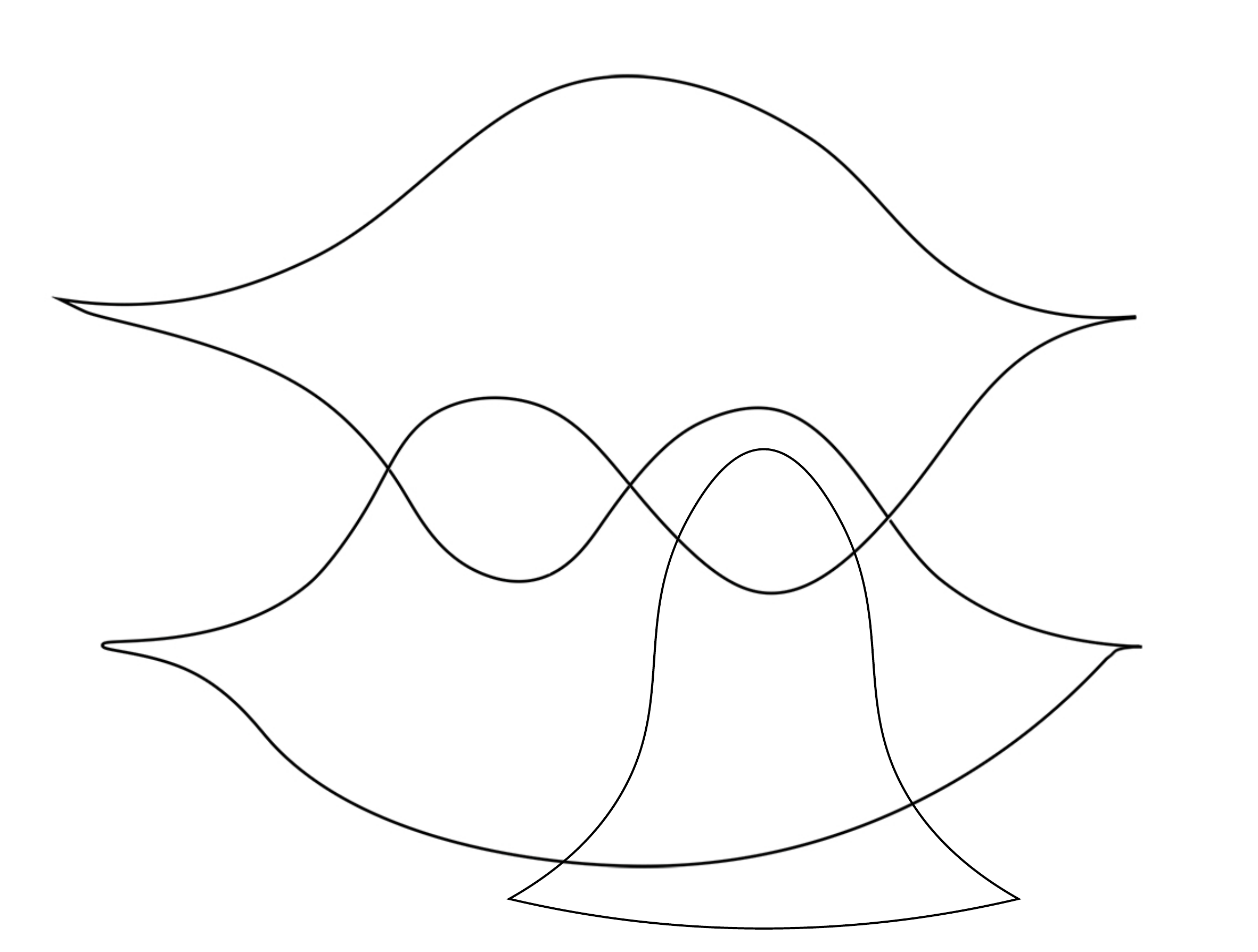}}
  \caption{Link between the trefoil knot and a legendrian unknot.}
  \label{fig:trefle_trivial}
\end{figure}

The gradings of the new generators are the following:
\[ |a_3| = 1, |c_1| = k+1, |c_2| = k,\]
\[|d_1| = -k-1, |d_2| = -k.\]
The differentials of the new generators are
\[\partial^{\varepsilon_n} c_1 = (-1)^{|c_2|}(nc_2 - c_2), \ \partial^{\varepsilon_n}c_2=0,\]
\[\partial^{\varepsilon_n}d_1 = 0, \ \partial^{\varepsilon_n}d_2 = d_1 - nd_1,\  \partial^{\varepsilon_n}a_1 = 0.\]
Thus, this generates $(1-n)$-torsion in the linearized Legendrian contact homology of the link, in degrees $k$ and $-k-1$. 
The homology of this link also has an extra generator in degree 1, represented by $a_3$.

We can now do a connected sum operation, as explained in section~\ref{sec:connsum}. This connected sum creates a new crossing $c$. For $k\neq1$, there are no disks of which the boundary goes through $c$ without turning in $c$, and for $k=1$, the only such disk must turn in $d_1$, which is sent to 0 by our augmentation. Due to the discussion which precedes Proposition~\ref{prop:connsum}, we may then extend $\varepsilon_n$ by setting $\varepsilon_n(c)=-1$. 

Since $\tau_1(a_3) = [\Lambda_{1,k}]$, $\rho_1$ is 
surjective, so that by Proposition~\ref{prop:connsum} this has the effect of removing one generator in degree 1 (in fact, since $\tau_1(a_1-a_2) =  [\Lambda_{\rm tr}]$, 
the remaining generator is $[a_1 - a_2 + a_3]$). The knot we obtain, which we shall call $\Lambda^T_k$, 
therefore has the following LCH:
$$
LCH^{\varepsilon_n}(\Lambda^T_k) = \Z[-1] \oplus \Z^2[0] \oplus \Z/(n-1)\Z[-k] \oplus \Z/(n-1)\Z[k+1].
$$

We can perform this operation using a collection of $r$ Legendrian unknots $\Lambda^T_{1,k_1}, \ldots, \Lambda^T_{r, k_r}$
with each $k_i < 0$. 

Denoting $\overline{k} = (k_1, \ldots, k_r)$, we therefore obtain a knot $\Lambda^T_{\overline{k}}$ which has the following 
linearized Legendrian contact homology:
\begin{align*}
LCH^{\varepsilon_n}(\Lambda^T_{\overline{k}}) = & \ \Z[-1] \oplus \Z^2[0] \\
& \oplus \bigoplus_{i=1}^r ( \Z/(n-1)\Z[-k_i] \oplus \Z/(n-1)\Z[k_i+1]).
\end{align*}

\subsection{Construction of the free part of linearized LCH}
\label{sec:freelinLCH}
We follow the construction used in \cite[Lemma~6.10]{BST} to establish the geography of generating family homology for Legendrian knots
with $\Z_2$ coefficients. To this end, we start with the standard Legendrian Hopf link, with a shift of $k \in \Z$ between the Maslov 
potentials of the two connected components of this link, denoted for this reason by $\Lambda^F_k$. Its Chekanov-Eliashberg DGA has 
two generators $a_1$ and $a_2$ of degree $1$, a generators $b_1$ of degree $k$ and a generator $b_2$ of degree $-k$. 
The differential is given by $\partial a_1 = b_1 b_2$ and $\partial a_2 = b_2 b_1$. We equip this link with the trivial augmentation
$\varepsilon_0$, which sends every generator to $0$. The linearized differential therefore vanishes, and 
$$
LCH^{\varepsilon_0}(\Lambda^F_k) = \Z^2[-1] \oplus \Z[-k] \oplus \Z[k].
$$
We then perform a Legendrian isotopy consisting of $k-1$ successive first Reidemeister moves in the front projection, in view of 
making a connected sum as shown in Figure~\ref{fig:Hopfconnsum}: we replace the contents of the dotted rectangle as in 
Figure~\ref{fig:connsum}.

\begin{figure}
\labellist
\small\hair 2pt
\pinlabel {$\Lambda_1$} [bl] at 20 155
\pinlabel {$\Lambda_2$} [bl] at 20 0
\pinlabel {$b_1$} [bl] at 40 105
\pinlabel {$b_2$} [bl] at 115 102
\pinlabel {$a_1$} [bl] at 290 135
\pinlabel {$a_2$} [bl] at 290 20
\pinlabel {$\left.  \rule{0pt}{0.75cm} \right\} k-1$}  [bl] at 245 50
\endlabellist
  \centerline{\includegraphics[width=7cm]{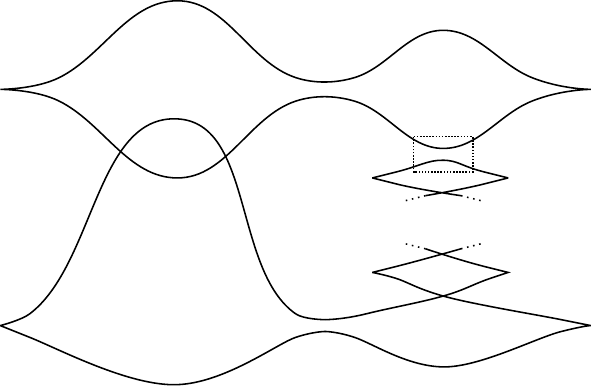}}
  \caption{Legendrian Hopf link before connected sum.}
  \label{fig:Hopfconnsum}
\end{figure}

Let us denote by $\Lambda_1$ the top component of $\Lambda^F_k$, by $\Lambda_2$ the bottom component of $\Lambda^F_k$,
and by $\widetilde{\Lambda}^F_k$ the Legendrian knot obtained by the connected sum operation on $\Lambda^F_k$. This connected sum creates a new crossing $c$, and like in the previous section, for $k\neq1$, there are no disks of which the boundary goes through $c$ without turning in $c$, and for $k=1$, the only such disk must turn in $b_1$, which is sent to 0 by our augmentation. So we may extend $\varepsilon_0$ to $\widetilde{\varepsilon}_0$ by setting $\widetilde{\varepsilon}_0(c)=-1$.
Since $\tau_1([a_i]) = [\Lambda_i]$ for $i = 1, 2$, Proposition~\ref{prop:connsum} then implies that 
$$
LCH^{\widetilde{\varepsilon}_0}(\widetilde{\Lambda}^F_k) = \Z[-1] \oplus \Z[-k] \oplus \Z[k].
$$

\subsection{Proof of Theorem~\ref{thm:linLCH}}

First, Corollary~\ref{cor:torsion_iso} and the discussion that follows shows that the torsion part of linearized LCH for any Legendrian knot
has the form $T \oplus \overline{T}[1]$ for some finitely generated graded torsion $\Z$-module $T$. Moreover, the remainder of 
Section~\ref{sec:constraints_lin} shows that the free part of linearized LCH for any Legendrian knot has the form $\Z[-1] \oplus F \oplus \overline{F}$
 for some finitely generated graded free $\Z$-module $F$. This proves the first part of Theorem~\ref{thm:linLCH}.

Conversely, given a finitely generated graded free $\Z$-module $F$, we denote the list of the gradings of its generators by 
$\overline{k} = (k_1, \ldots, k_r)$. Let $\widetilde{\Lambda}^F_{k_1}, \ldots, \widetilde{\Lambda}^F_{k_r}$ be the Legendrian knots
as in Section~\ref{sec:freelinLCH}, equipped with the augmentations so that their linearized LCH are isomorphic to $\Z[-1] \oplus \Z[-k_i] \oplus \Z[k_i]$,
for $i = 1, \ldots, r$. Let $\widetilde{\Lambda}^F_{\overline{k}}$ be the result of the connected sum of these Legendrian knots, with their front projections placed in disjoint rectangles of the $xz$-plane. As in the previous subsection, we may extend the augmentation to an augmentation $\widetilde{\varepsilon}_{\overline{k}}$ on the new knot by sending the new crossings to $-1$. Since
the $\tau_1$ map of each $\widetilde{\Lambda}^F_{k_i}$ is surjective, this implies by Proposition~\ref{prop:connsum} that the resulting linearized LCH is
given by
$$
LCH^{\varepsilon_0}(\widetilde{\Lambda}^F_{\overline{k}}) = \Z[-1] \oplus \bigoplus_{i=1}^r ( \Z[-k_i] \oplus \Z[k_i]) = \Z[-1] \oplus F \oplus \overline{F}.
$$

On the other hand, given a finitely generated graded torsion $\Z$-module $T$, we can decompose it as the direct sum of graded free 
$\Z/(n_i-1)\Z$-modules for $i = 1, \ldots, d$. We denote by $\overline{\ell}^i = (\ell^i_1, \ldots, \ell^i_{s_i})$ the gradings of the generators of these
free modules. For each $i=1, \ldots, d$, consider the Legendrian knot $\Lambda^T_{\overline{\ell}^i}$ equipped with its augmentation $\varepsilon_{n_i}$
as in Section~\ref{sec:torsionlin}. Let us perform the connected sum of these augmented Legendrian knots. The augmentation extends by sending the new crossing to $-1$. By Proposition~\ref{prop:connsum},  for each of these connected sums, we lose one generator in degree 1. This is due to the fact that the image of the alternate sum over all the cusps in $\Lambda_{\overline{\ell}_i}$ by $\tau_1$ is $[\Lambda_{\overline{\ell}_i}]$. We therefore obtain an augmented knot $(\Lambda_T, \varepsilon_T)$ whose 
linearized LCH is given by
\begin{align*}
LCH^{\varepsilon_T}(\Lambda_T) &= \Z[-1] \\
& \oplus \bigoplus_{i=1}^d \left(\Z^{2}[0] \oplus \bigoplus_{j=1}^{s_i} ( \Z/(n_i-1)\Z[-\ell_j] \oplus \Z/(n_i-1)\Z[\ell_j+1]) \right) \\
&= \Z[-1] \oplus \Z^{2d}[0] \oplus T \oplus \overline{T}[1].
\end{align*}

It now suffices to make the connected sum of the augmented Legendrian knots $(\widetilde{\Lambda}^F_{\overline{k}}, \varepsilon_0)$ and
$(\Lambda_T, \varepsilon_T)$, to which the augmentation extends by sending the new crossing to $-1$ as in the discussion before Proposition~\ref{prop:connsum}. Applying Proposition~\ref{prop:connsum} one last time finishes the proof of Theorem~\ref{thm:linLCH}.

%%%%%%%%%%%%%%%%%%%%%%%%%%%%%%%%%%%%%%%%%%%%%%%%%%%%%%%%%%%
\section{Torsion in bilinearized Legendrian contact homology}
\label{sec:torsion_bilin}
%%%%%%%%%%%%%%%%%%%%%%%%%%%%%%%%%%%%%%%%%%%%%%%%%%%%%%%%%%%

\subsection{Bilinearized Legendrian contact homology of the trefoil knot}\label{subsec:trefoil_bi}
Let us compute the bilinearized Legendrian contact homology of the knot we studied in Section \ref{sec:torsion_lin}, when we take two non-dga-homotopic augmentations $\varepsilon_1$ and $\varepsilon_2$. 

We start by doing the computations for the trefoil knot $\Lambda_{\rm tr}$. The augmentations must satisfy, for $i=1,2$, $$1+\varepsilon_i(b_1)+\varepsilon_i(b_3)+\varepsilon_i(b_1b_2b_3)=0,$$ and $$1-\varepsilon_i(b_1t)-\varepsilon_i(b_3t)+\varepsilon_i(b_3b_2b_1t)=0.$$
We must therefore once again set $\varepsilon_i(t)=\varepsilon_i(t^{-1})=-1$, and we can make various choices for $b_1$, $b_2$ and $b_3$.

\begin{Ex}  \label{ex:tau0-bilin}
Let us choose the following augmentations for the trefoil knot:
\begin{align*}
\varepsilon_1(b_1)&=-1 & \varepsilon_2(b_1)&=-1,\\
\varepsilon_1(b_2)&=0 & \varepsilon_2(b_2)&=n\in\Z,\\
\varepsilon_1(b_3)&=0 & \varepsilon_2(b_3)&=0.
\end{align*}
The bilinearized differential is then given by
\begin{align*}
\partial^{\varepsilon_1,\varepsilon_2} b_1 &= \partial^{\varepsilon_1,\varepsilon_2} b_2 = \partial^{\varepsilon_1,\varepsilon_2} b_3 = 0, \\
\partial^{\varepsilon_1,\varepsilon_2} a_1 &= b_1 + b_3,\\
 \partial^{\varepsilon_1,\varepsilon_2} a_2 &=b_1 + b_3 -nb_3.
 \end{align*}
Hence, $[b_1] = -[b_3]$ and $n[b_3]=0$ so that
$$
LCH^{\varepsilon_1, \varepsilon_2}(\Lambda_{\rm tr}) = \Z[0] \oplus \Z/n\Z[0].
$$
Unlike in linearized LCH, we obtain a single torsion generator. Let us also compute the map 
$\tau_0: LCH^{\varepsilon_1, \varepsilon_2}_0(\Lambda_0) \to H_0(\Lambda_0) = \Z$, 
which is given by $\varepsilon_2 - \varepsilon_1$ at chain level: $\tau_0(b_1) = 0$, $\tau_0(b_2) = n$ and $\tau_0(b_3) = 0$. In particular,
the image of $\tau_0$ in $n\Z \subset H_0(\Lambda_0)$, so that the map $\tau_0$ is not surjective when $|n| > 1$. 
This shows that the statement of Proposition~\ref{prop:tau} cannot be improved.
\end{Ex}

In order to construct torsion in any grading, we now make another choice for the augmentations $\varepsilon_1$ and $\varepsilon_2$:
\begin{align*}
\varepsilon_1(b_1)&=0 & \varepsilon_2(b_1)&=-1,\\
\varepsilon_1(b_2)&=0 & \varepsilon_2(b_2)&=n\in\Z,\\
\varepsilon_1(b_3)&=-1 & \varepsilon_2(b_3)&=0.
\end{align*}
The differentials on the trefoil knot become 
\begin{align*}
\partial^{\varepsilon_1,\varepsilon_2} b_1 &= \partial^{\varepsilon_1,\varepsilon_2} b_2 = \partial^{\varepsilon_1,\varepsilon_2} b_3 = 0, \\
\partial^{\varepsilon_1,\varepsilon_2} a_1 &= b_1 + b_3,\\
 \partial^{\varepsilon_1,\varepsilon_2} a_2 &=b_1 + b_3 + b_2 -nb_3.\end{align*}
 We therefore no longer have any homology generators in degree 1, and we have a unique homology generator in degree 0, and so
 $$
 LCH^{\varepsilon_1, \varepsilon_2}(\Lambda_{\rm tr}) = \Z[0].
 $$
 
Now if we link this knot with a Legendrian unknot $\Lambda_{1,k}$ like we did in Figure \ref{fig:trefle_trivial}, with a shifted Maslov potential of $k\in\Z$, we again have five new generators $c_1$, $c_2$, $d_1$, $d_2$, and $a_3$ over which both augmentations extend to 0, and of gradings
 \[|a_3| = 1, |c_1| = k+1, |c_2| = k,\]
\[|d_1| = -k-1, |d_2| = -k,\]
which have the linearized differentials
\[\partial^{\varepsilon} c_1 = (-1)^{|c_2|}(nc_2 - c_2), \ \partial^{\varepsilon}c_2=0,\]
\[\partial^{\varepsilon}d_1 = 0, \ \partial^{\varepsilon}d_2 = d_1,\  \partial^{\varepsilon}a_1 = 0.\]
So $d_1$ and $d_2$ cancel out in homology, and we are left with a unique free generator $a_3$ in degree 1, and a unique torsion generator $c_2$ in degree $k$. 

Since $\tau_1([a_3])=[\Lambda_{1,k}]$, the fundamental class of $\Lambda_{1,k}$, $\rho_1$ is surjective for this link, and so by Proposition~\ref{prop:connsum} when we do the connected sum between $\Lambda_0$ and $\Lambda_{1,k}$ (for which the augmentation extends by sending the new crossing to $-1$), we simply lose a generator in degree 1. 
Therefore, we obtain a knot $\Lambda^T_k$ such that
$$
LCH^{\varepsilon_1, \varepsilon_2}(\Lambda_k) = LCH^{\varepsilon_1, \varepsilon_2}(\Lambda_{\rm tr}) \oplus \Z/(n-1)\Z[-k]
= \Z[0] \oplus \Z/(n-1)\Z[-k].
$$

By repeating this operation with a series of different Legendrian unknots $\Lambda_{1, k_1}, \ldots, \Lambda_{r,k_r}$ and denoting
$\overline{k} = (k_1, \ldots, k_r)$, we get a knot $\Lambda_{\overline{k}}$, which has the following bilinearized Legendrian 
contact homology 

$$
LCH^{\varepsilon_1,\varepsilon_2}(\Lambda^T_{\overline{k}}) = \Z[0] \oplus \bigoplus_{i=1}^r \Z/(n-1)\Z[-k_i].
$$
This shows that there exists a knot whose bilinearized Legendrian contact homology has a minimal free part (one unique degree 0 generator) 
and with torsion in any degree. In order to combine different torsion orders, we cannot use different trefoils and then make a connected sum, 
because the assumptions of Proposition~\ref{prop:connsum} will not be satisfied. Instead, we need to replace the trefoil with a more complicated 
Legendrian knot, and this will be done in the next section.

\subsection{Constructions involving the $2n$-copy of the Legendrian unknot}\label{subsection:2n-copy}
\subsubsection{Bilinearized Legendrian contact homology of the link}\label{subsubsection:link_homology}
We study here the bilinearized Legendrian contact homology of $\Lambda^{(2n)}$, the $2n$-copy of the Legendrian unknot, where $n$ is a positive integer. This is a Legendrian link formed of $2n$ copies of the standard Legendrian unknot that are slightly pushed off from one another so that every component still intersects every other component. We denote these components by $\Lambda_1, \ldots, \Lambda_{2n}$ from bottom to top. We also set the Maslov potentials of these components so that the Maslov potential of $\Lambda_{i+1}$ is one higher than the Maslov potential of $\Lambda_i$, for
$i = 1, \ldots, 2n-1$. We put a base point on the lower strand of each component. See Figure \ref{fig:6-copy}. 

\begin{figure}[h]
\labellist
\small\hair 2pt
\pinlabel {$a_6$} [bl] at 137 94
\pinlabel {$a_5$} [bl] at 137 81
\pinlabel {$a_4$} [bl] at 137 68
\pinlabel {$a_3$} [bl] at 137 58
\pinlabel {$a_2$} [bl] at 137 48
\pinlabel {$a_1$} [bl] at 137 36
\pinlabel {$b_{1,2}$} [bl] at 118 46
\pinlabel {$b_{2,3}$} [bl] at 118 57
\pinlabel {$b_{3,4}$} [bl] at 118 69
\pinlabel {$b_{4,5}$} [bl] at 118 80
\pinlabel {$b_{5,6}$} [bl] at 118 92
\pinlabel {$b_{2,1}$} [bl] at 13 45
\pinlabel {$b_{3,2}$} [bl] at 13 57
\pinlabel {$b_{4,3}$} [bl] at 13 69
\pinlabel {$b_{5,4}$} [bl] at 13 80
\pinlabel {$b_{6,5}$} [bl] at 13 92
\pinlabel {$b_{6,1}$} [bl] at 48 68
\pinlabel {$b_{1,6}$} [bl] at 83 68
\endlabellist
  \centerline{\includegraphics[width=8cm]{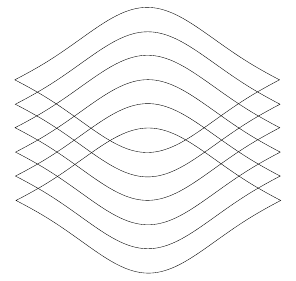}}
  \caption{6-copy of the Legendrian unknot.}
  \label{fig:6-copy}
\end{figure}

This gives us $2n$ cusps $a_1, …, a_{2n}$, as well as crossings $b_{i,j}$ with $i \neq j$ for $i,j = 1, \ldots, 2n$. The crossings are named such that $b_{i,j}$ and $b_{j,i}$ are the two only intersection points between the $\Lambda_i$ and $\Lambda_j$, and $b_{i,j}$ is the one for which the strand with the more negative slope belongs to $\Lambda_i$. 

The gradings are the following:
\[ | a_i | =  1, |b_{i,j}| = i-j +1 \text{ if $i<j$}, |b_{i,j}| = i - j - 1 \text{ if $i>j$}. \]

Due to Leverson's result~\cite{L}, we once again send $t_i$ for $i=1,…,2n$ to $-1$.

The only generators with grading zero, and therefore the only generators which can be augmented, are the intersections between two consecutive unknots. Since the differential of $b_{i,i-2}$ is $b_{i, i-1}b_{i-1,i-2}$, for any augmentation $\varepsilon$ and any $i$, we must either have $\varepsilon(b_{i, i-1})=0$ or $\varepsilon(b_{i-1,i-2})=0$. Also, to cancel $\varepsilon\circ\partial (a_i)$, we must have $\varepsilon(b_{i,i-1}b_{i-1,i})=0$.

Let us note that, for any choice of augmentations $\varepsilon_1$ and $\varepsilon_2$, the bilinearized differentials are 
\begin{align*} \partial^{\varepsilon_1, \varepsilon_2} a_i &= \varepsilon_1(b_{i, i-1})b_{i-1,i} + \varepsilon_1(b_{i,i+1})b_{i+1, i} + \varepsilon_2(b_{i-1,i})b_{i, i-1} + \varepsilon_2(b_{i+1, i})b_{i, i+1},\\
\partial^{\varepsilon_1, \varepsilon_2} b_{i,j} &= \varepsilon_1(b_{i, i-1})b_{i-1,j} +(-1)^{i+j} \varepsilon_2(b_{j+1, j})b_{i, j+1}.\end{align*}
We make the following choice of augmentations:
\begin{align*}\varepsilon_1(b_{2i, 2i-1})&=1 & \varepsilon_2(b_{2i, 2i-1})&=0 ,\\
\varepsilon_1(b_{2i+1, 2i}) &= 0 & \varepsilon_2(b_{2i+1, 2i}) &= 1,\\
\varepsilon_1(b_{2i-1, 2i}) &= 0 & \varepsilon_2(b_{2i-1, 2i}) &= 0,\\
\varepsilon_1(b_{2i, 2i+1}) &= 0 & \varepsilon_2(b_{2i, 2i+1}) &= 0.\end{align*}
So, the differentials are now 
\begin{equation}\label{eq:computation_diff}\begin{aligned} 
\partial^{\varepsilon_1, \varepsilon_2} a_{2i} &= b_{2i,2i+1} + b_{2i-1,2i}& \partial^{\varepsilon_1, \varepsilon_2}a_{2i+1} &= 0, \\
\partial^{\varepsilon_1, \varepsilon_2} b_{2i,2j} &= b_{2i-1,2j} + b_{2i, 2j+1} & \partial^{\varepsilon_1, \varepsilon_2} b_{2i+1,2j+1} &= 0, \\
 \partial^{\varepsilon_1, \varepsilon_2} b_{2i+1,2j} &= b_{2i+1,2j+1}& \partial^{\varepsilon_1, \varepsilon_2} b_{2i,2j+1} &= b_{2i-1,2j+1},
\end{aligned}\end{equation}
(when such terms do not exist, they are replaced by 0).

Let us see what generators survive in homology. Firstly, generators of the form $b_{2i+1,2j+1}$ cancel out in homology because they are in the image of the differential: $\partial^{\varepsilon_1, \varepsilon_2} b_{2i+2,2j+1} = b_{2i+1,2j+1}$. Also, if $b_{2i+1,2j+1}$ exists (so if $i\neq$j and $2i+1$ and $2j+1$ are both between 1 and $2n$), then $b_{2i+2,2j+1}$ always exists, because $2n$ is even. 

In terms of linear combinations that the differential can send to 0, we have $\partial^{\varepsilon_1, \varepsilon_2} (b_{2i,2j+1} + b_{2i-1,2j})=0$. However, this term is the image by the differential of $b_{2i,2j}$, so cancels out in homology. No other non-trivial linear combinations can be cancelled by the differential.

We also have $\partial^{\varepsilon_1, \varepsilon_2}a_{2i+1} = 0$, and these terms never appear in the image of the differential, so this gives us $n$ homology generators in degree 1.

Finally, the last case to consider is when the terms that could appear in the differential actually do not exist. This is the case for elements of the form $b_{i,i-1}$: we have $\partial^{\varepsilon_1, \varepsilon_2} b_{2i,2i-1} = b_{2i-1,2i-1}$, which does not exist, and we have $\partial^{\varepsilon_1, \varepsilon_2} b_{2i+1,2i} = -b_{2i+1,2i+1}$, which also is a term that does not exist. So the differentials are 0, and we have $\partial^{\varepsilon_1, \varepsilon_2} b_{2i, 2i-2} = b_{2i-1,2i-2} + b_{2i, 2i-1}$. Therefore, this produces $n$ homology generators, and since elements of the form $b_{i,i-1}$ are of grading 0, we get $n$ homology generators in degree 0.

We therefore have:
 $$
 LCH^{\varepsilon_1,\varepsilon_2}(\Lambda^{(2n)}) = \Z^n[0] \oplus \Z^n[-1].
 $$

\subsubsection{A special connected sum}\label{subsection:unclasp}
We now want to do some sort of connected sum to obtain a knot from $\Lambda^{(2n)}$. Unfortunately, the usual connected sum described in section~\ref{sec:connsum} can only be done $n-1$ times before it starts adding generators in degree 0, and we need to do it $2n-1$ times. Nevertheless, we still proceed with this operation: we connect all components $\Lambda_{2i-1}$ for $i = 1, \ldots, n$ to each other and all components $\Lambda_{2i}$ for $i = 1, \ldots, n$ to each other. Each time, there are no disks which appear after the connected sum which go through the new crossing created without turning in it, and so due to the discussion which precedes Proposition~\ref{prop:connsum}, our augmentations extend to augmentations that we name $\bar\varepsilon_1$ and $\bar\varepsilon_2$ by sending the new crossings to $-1$. Since the homology in degree 1 is generated by the odd cusps and $\tau_1([a_{2i-1}]) = [\Lambda_{2i-1}]$, the map $\rho_1$ is surjective and by Proposition~\ref{prop:connsum} each connected sum between odd-numbered components reduces the homology by one degree 1 generator. Since we do this operation $n-1$ times, we end up with one generator in degree 1,  $[a_{\text{odd}}]$, represented by an alternate sum of the odd cusps. On the other hand, the even cusps are null in homology, so that the map $\rho_1$ vanishes and by Proposition~\ref{prop:connsum} each connected sum between even-numbered components gives us an additional degree 0 generator. Again, we do this $n-1$ times, so we end up with $2n-1$ degree 0 generators. We obtain a link $\Lambda^{(2n)}_{\rm e,o}$ which has two connected components, $\Lambda_{\rm even}$ and $\Lambda_{\rm odd}$, and which has the homology
 $$
 LCH^{\bar\varepsilon_1,\bar\varepsilon_2}(\Lambda^{(2n)}_{\rm e,o}) = \Z^{2n-1}[0] \oplus \Z[-1].
 $$
 More precisely, we perform these connected sums in the following way: for each $i= 1, \dots, n-1$, we perform the two-copy of a first Reidemeister move
 on $\Lambda_{2i-1}$ and $\Lambda_{2i}$. We then perform twice a second Reidemeister move so that the right cusp of $\Lambda_{2i+1}$ crosses
 twice the uppermost strand of $\Lambda_{2i}$. We are now in position to make a connected sum between $\Lambda_{2i-1}$ and $\Lambda_{2i+1}$
 as well as a connected sum between $\Lambda_{2i}$ and $\Lambda_{2i+2}$, as shown on the left of Figure~\ref{fig:double-conn-sum}.
Finally, we perform a third Reidemeister move so that the top stratum of $\Lambda_{2i+1}$ passes above the newly created crossing due to 
the connected sum of $\Lambda_{2i}$ and $\Lambda_{2i+2}$. The right of Figure~\ref{fig:double-conn-sum} illustrates as a dotted line of the
top stratum of $\Lambda_{2i+1}$ after this third Reidemeister move.

 \begin{figure}[h]
  \labellist
\small\hair 2pt
\pinlabel {$\Lambda_{2i-1}$} [bl] at -25 5
\pinlabel {$\Lambda_{2i}$} [bl] at -25 30
\pinlabel {$\Lambda_{2i+1}$} [bl] at -25 130
\pinlabel {$\Lambda_{2i+2}$} [bl] at -25 175
\pinlabel {\scriptsize$\Lambda_{2i-1} \# \Lambda_{2i+1}$} [bl] at 300 -5
\pinlabel {\scriptsize$\Lambda_{2i}\#\Lambda_{2i+2}$} [bl] at 255 48
\pinlabel {\scriptsize$\Lambda_{2i-1} \# \Lambda_{2i+1}$} [bl] at 450 150
\pinlabel {\scriptsize$\Lambda_{2i}\#\Lambda_{2i+2}$} [bl] at 460 210
\endlabellist
 \includegraphics[width=5cm]{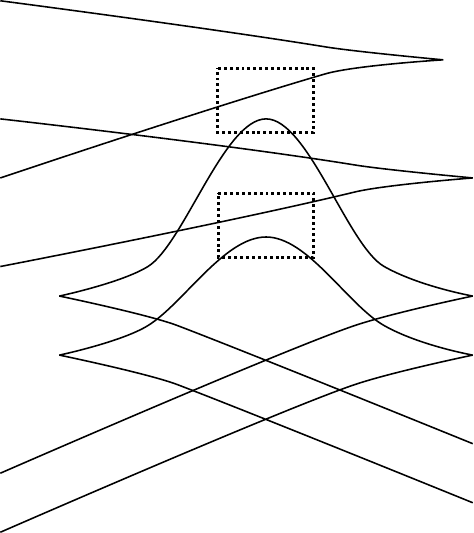} \hspace{8mm}
 \includegraphics[width=5cm]{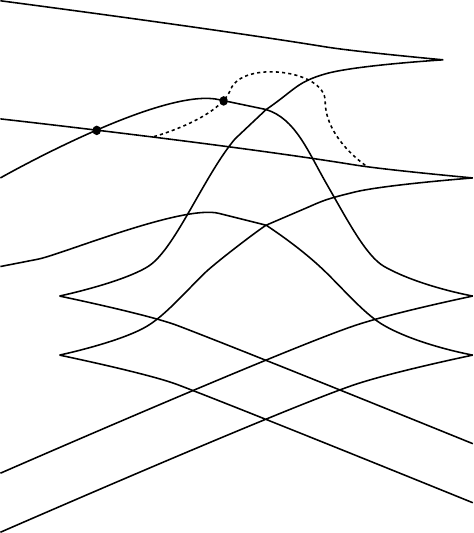}
  \caption{Before and after the connected sums between $\Lambda_{2i-1}$ and $\Lambda_{2i+1}$ and between $\Lambda_{2i}$ and $\Lambda_{2i+2}$.}
  \label{fig:double-conn-sum}
\end{figure}
 
Consider now the pair of degree 0 crossings depicted as black dots on the right of Figure~\ref{fig:double-conn-sum} and corresponding to 
intersection points of $\Lambda_{2i-1} \# \Lambda_{2i+1}$ and $\Lambda_{2i}\#\Lambda_{2i+2}$ just to the left of the newly created crossing 
due to the connected sum of $\Lambda_{2i}$ and $\Lambda_{2i+2}$.
In order to get rid of this pair of crossings, we perform an operation known as an \emph{unclasp}: that is, we get rid of two crossings by raising the band that passes underneath (and/or lowering the one that passes above). We want to show that each 
unclasp, for $i = 1, \ldots, n-1$, has the effect of removing two generators of degree 0 in homology. 

We first need to compute the augmentations on the new generators after the double connected sum, and in particular check that both crossings which are removed are sent to 0. We will therefore describe the effect of a double connected sum on the augmentations, step by step. To distinguish the augmentations after the double connected sum with the ones before, we will call the new augmentations $\bar\varepsilon_1$ and $\bar\varepsilon_2$.

We start by doing a two-copy of a Reidemeister I operation. This move is in fact nothing more than two Reidemeister I moves, which create two new right cusps $a_{\rm I,2i-1}$ and $a_{\rm I,2i}$, as well as two new crossings $b_{\rm I,2i-1}$ and $b_{\rm I,2i}$, followed by two Reidemeister II moves which make the left and right cusps of the bottom component cross the strands of the top component and create four new cusps $b_{\rm II,1}$ to $b_{\rm II,4}$, followed by a Reidemeister III move which brings the strand that goes from $b_{\rm II,2}$ to $b_{\rm II,4}$ over $b_{\rm I,2i}$. See Figure \ref{fig:double_reidemeister_1}.

It is not hard to see that after the first pair of Reidemeister I moves, $b_{\rm I,2i-1}$ and $b_{\rm I,2i}$ must be sent to $-1$ by either augmentation. 
For the Reidemeister II moves, we will use the pullback of the augmentations by the DGA map $\Phi_{\rm II}$ associated to this operation. The map is computed in \cite[section 6.3.3.]{EKK}: if $a$ and $b$ are the two crossings created by a Reidemeister II operation, ordered by their grading, and $\partial a = \pm b+v$, then we have \begin{equation} \Phi_{\rm II}(a)=0,\ \Phi_{\rm II}(b)=v,\end{equation} which therefore gives us, for $j=1,2$, \begin{equation}\label{eq:augmentations_reidemeister_II}\bar\varepsilon_j(a)=0, \ \bar\varepsilon_j(b)=\mp \varepsilon_j(v).\end{equation} In our case, we start by doing the first Reidemeister II move on the right, and we have $\partial b_{\rm II,1}=-b_{\rm II,2}b_{\rm I,2i-1}-b_{\rm I,2i}b_{2i,2i-1}$ (there may be some other terms, but all which are sent to 0 by both augmentations, so we ignore them). We may then deduce that $\bar\varepsilon_j(b_{\rm II,2})=\varepsilon_j(b_{\rm I,2i}b_{2i,2i-1})=-\varepsilon_j(b_{2i,2i-1})$. The Reidemeister II move on the left gives us $\partial b_{\rm II,4}=b_{\rm II,3}$, and so both crossings are sent to 0 by both augmentations.
Finally, the DGA map $\Phi_{\rm III}$ for the Reidemeister III move gives us 
\begin{equation}\label{eq:reidemeister_III}\begin{aligned} \Phi_{\rm III}(b_{\rm II,2})&=b_{\rm II,2}+b_{\rm I,2i}b_{\rm II,4},\\ \Phi_{\rm III}(b_{\rm I,2i})&=b_{\rm I,2i},\\ \Phi_{\rm III}(b_{\rm II,4})&=b_{\rm II,4},\end{aligned}\end{equation} per \cite[section 6.3.2.]{EKK}.  The augmentations therefore stay the same, except for $b_{\rm II,2}$, for which we get $\bar\varepsilon_j(b_{\rm II,2})=\varepsilon_j(b_{\rm II,2})+\varepsilon_j(b_{\rm I,2i}b_{\rm II,4})$. The value of the augmentations of $b_{\rm II,2}$ therefore also stay unchanged, as $\varepsilon_j(b_{\rm II,4})=0$. 

\begin{figure}[h]
\begin{subfigure}{0.3\textwidth}
\labellist
\small\hair 2pt
\pinlabel {$b_{\rm I,1}$} [bl] at 110 75
\pinlabel {$b_{\rm I,2}$} [bl] at 110 165
\endlabellist
 \centerline{\includegraphics[width=3cm,height=3cm]{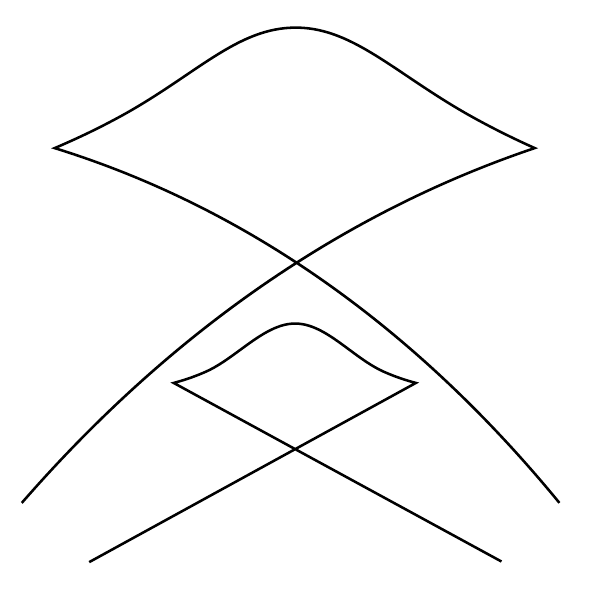}}
  \caption{Two Reidemeister I moves.}
  \label{fig:Reidemeister_I_double}
  \end{subfigure}
\begin{subfigure}{0.3\textwidth}
\labellist
\small\hair 2pt
\pinlabel {$b_{\rm I,1}$} [bl] at 110 55
\pinlabel {$b_{\rm I,2}$} [bl] at 110 165
\pinlabel {$b_{\rm II,1}$} [bl] at 245 55
\pinlabel {$b_{\rm II,2}$} [bl] at 195 105
\pinlabel {$b_{\rm II,3}$} [bl] at -35 65
\pinlabel {$b_{\rm II,4}$} [bl] at 10 105
\endlabellist
  \centerline{\includegraphics[width=3cm,height=3cm]{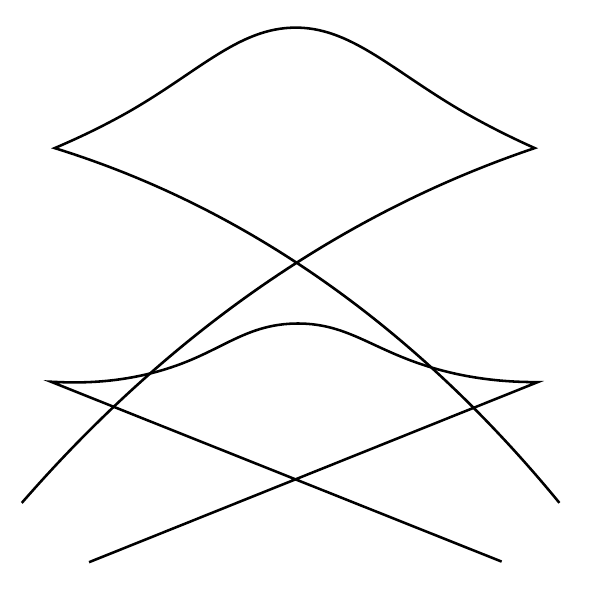}}
  \caption{Two Reidemeister II moves.}
  \label{fig:Reidemeister_II_double}
\end{subfigure}
\begin{subfigure}{0.3\textwidth}
\labellist
\small\hair 2pt
\pinlabel {$b_{\rm I,1}$} [bl] at 110 55
\pinlabel {$b_{\rm I,2}$} [bl] at 115 105
\pinlabel {$b_{\rm II,1}$} [bl] at 245 55
\pinlabel {$b_{\rm II,3}$} [bl] at -35 65
\pinlabel {$b_{\rm II,2}$} [bl] at 20 145
\pinlabel {$b_{\rm II,4}$} [bl] at 190 145
\endlabellist
  \centerline{\includegraphics[width=3cm,height=3cm]{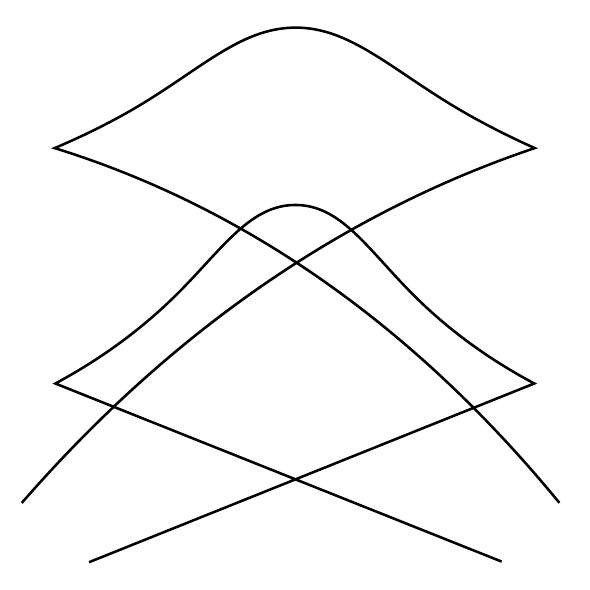}}
  \caption{One Reidemeister III move.}
  \label{fig:Reidemeister_III_double}
\end{subfigure}
  \caption{Two-copy of the Reidemeister I operation.}
  \label{fig:double_reidemeister_1}
\end{figure}

The following operations are seen in Figure \ref{fig:pre_unclasp}. We start with two Reidemeister II moves. Each one creates two new crossings, which we label $b_{\rm II,5}, b_{\rm II,6}$ and $b_{\rm II,7}, b_{\rm II,8}$. The differentials are of the form \begin{equation*}\partial b_{\rm II,5}=b_{\rm II,6}(1+c), \ \partial b_{\rm II,8}= b_{\rm II,7}(1+c)\end{equation*} where $c$ is a sum of elements of the form $b_{i,j}b_{j,i}$, and therefore sent to 0 by either augmentation. Per equation \eqref{eq:augmentations_reidemeister_II}, the values of the augmentations of these new crossings must be 0. 

Next we perform the two connected sums, which each create one new crossing, which we label $b_{\rm o}$ for the crossing between $\Lambda_{2i-1}$ and $\Lambda_{2i+1}$, and $b_{\rm e}$ for the crossing between $\Lambda_{2i}$ and $\Lambda_{2i+2}$. The previous augmentations extend by sending the new crossings to $-1$.

Finally, the last move is a Reidemeister III move which brings the strand passing through $b_{\rm II,6}$ and $b_{\rm II,8}$ over $b_{\rm e}$. Per equation \ref{eq:reidemeister_III}, we get \begin{gather*}\Phi_{\rm III}(c)= c, \ c\neq b_{\rm II,6},\\ \Phi_{\rm III}(b_{\rm II,6})=b_{\rm II,6}\pm b_{\rm e}b_{\rm II,8}.\end{gather*} Since $b_{\rm e}$ is sent to $-1$ by both augmentations and $b_{\rm II,8}$ and $b_{\rm II,6}$ are sent to 0, the pullback of the augmentations by this map tells us that the values of the augmentations over every generator are unchanged. 

To summarize, we get that the values of the augmentations on the generators which existed before the unclasp remain unchanged, and the others take the following values:
\begin{align*}
\bar\varepsilon_1(b)=\bar\varepsilon_2(b)&=  0, \text{ if $b\neq b_{\rm I,2i-1},b_{\rm I,2i},b_{\rm II,2},b_{\rm e},b_{\rm o}$},\\
\bar\varepsilon_1(b)=\bar\varepsilon_1(b)&=-1, \text{ if $b= b_{\rm I,2i-1},b_{\rm I,2i},b_{\rm e},b_{\rm o}$},\\
\bar\varepsilon_j(b_{\rm II,2})=-\varepsilon_j(b_{2i,2i-1})&=\begin{cases} -1 &\text{ if $j=1$},\\ 0 &\text{ if $j=2$}.\end{cases}
\end{align*}
In particular, $\bar\varepsilon_j(b_{\rm II,6})=0$, for $j=1,2$.

\begin{figure}[h]
\begin{subfigure}{0.4\textwidth}
\labellist
\small\hair 2pt
\endlabellist
 \centerline{\includegraphics[width=6cm,height=4.5cm]{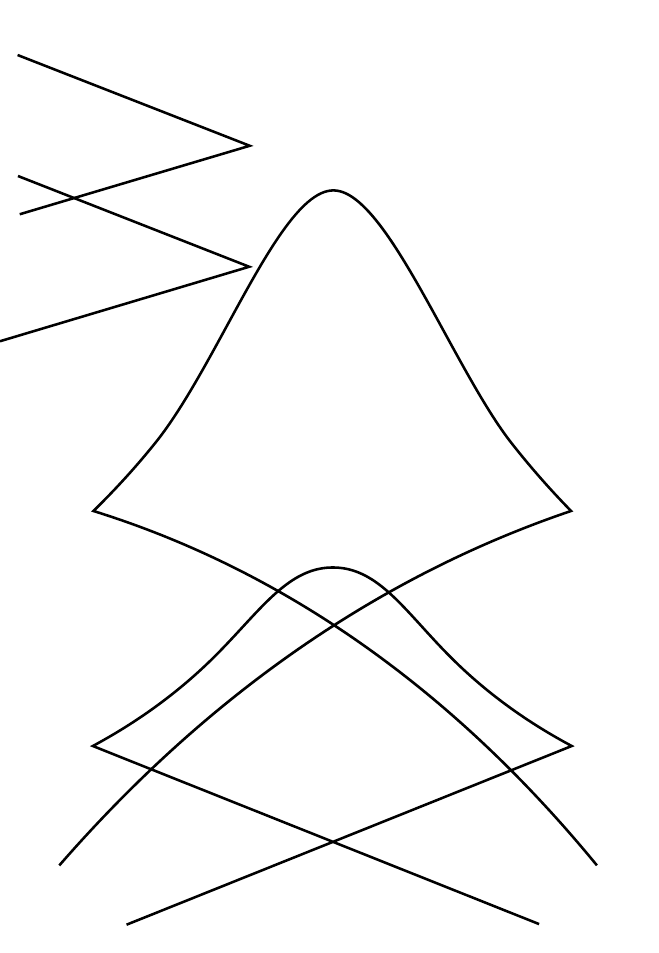}}
  \caption{After the double Reidemeister I move.}
  \label{fig:Reidemeister_II_before}
  \end{subfigure}
\begin{subfigure}{0.4\textwidth}
\labellist
\small\hair 2pt
\pinlabel {$b_{\rm II,5}$} [bl] at 205 275
\pinlabel {$b_{\rm II,6}$} [bl] at 175 335
\pinlabel {$b_{\rm II,7}$} [bl] at 50 260
\pinlabel {$b_{\rm II,8}$} [bl] at 75 315
\endlabellist
  \centerline{\includegraphics[width=6cm,height=4.5cm]{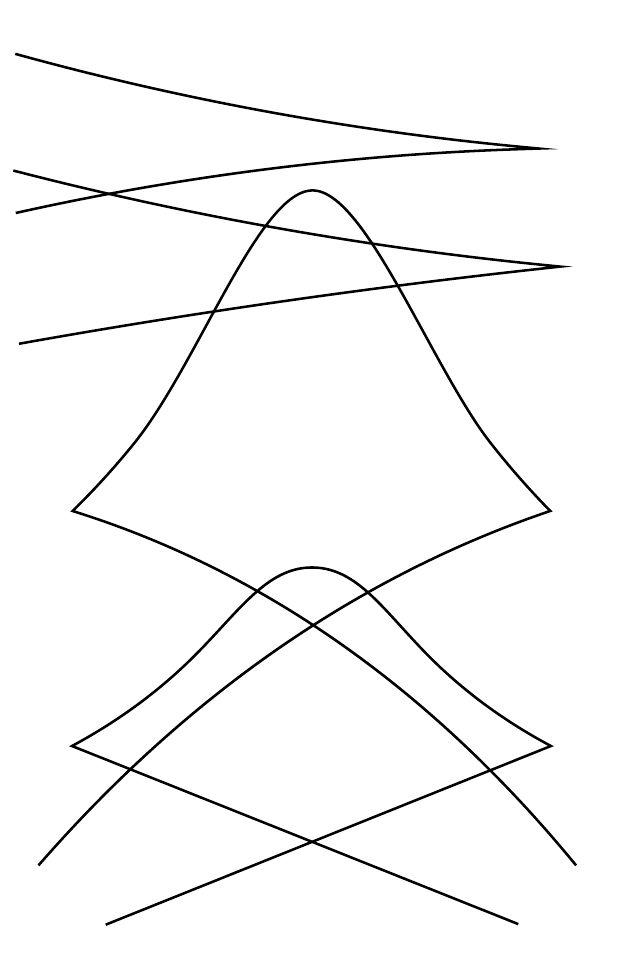}}
  \caption{Two Reidemeister II moves.}
  \label{fig:Reidemeister_II_twice}
\end{subfigure}
\begin{subfigure}{0.4\textwidth}
\labellist
\small\hair 2pt
\pinlabel {$b_{\rm e}$} [bl] at 135 240
\pinlabel {$b_{\rm o}$} [bl] at 150 342
\endlabellist
  \centerline{\includegraphics[width=6cm,height=4.5cm]{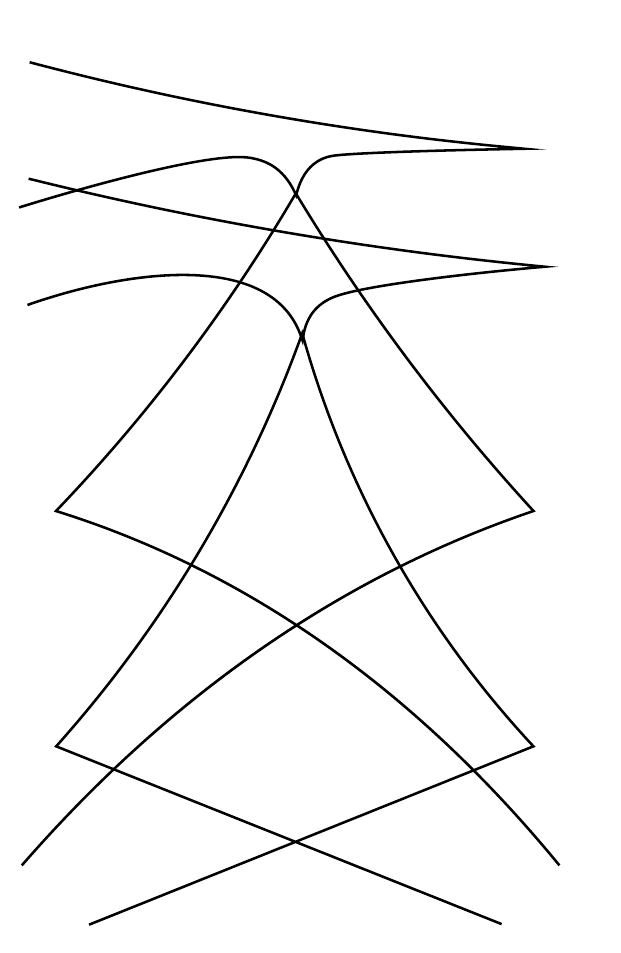}}
  \caption{Double connected sum.}
  \label{fig:double_somme_co}
\end{subfigure}
\begin{subfigure}{0.4\textwidth}
\labellist
\small\hair 2pt
\pinlabel {$b_{\rm II,8}$} [bl] at 180 340
\pinlabel {$b_{\rm II,6}$} [bl] at 85 345
\pinlabel {$b_{\rm o}$} [bl] at 140 310
\pinlabel {$b_{2i+1,2i+2}$} [bl] at 00 375
\endlabellist
  \centerline{\includegraphics[width=6cm,height=4.5cm]{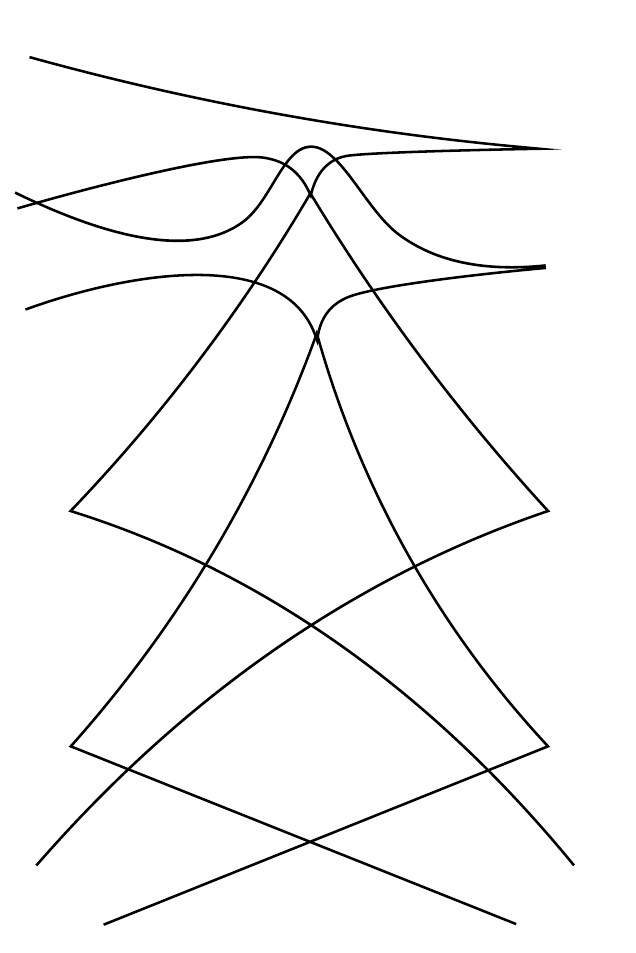}}
  \caption{One Reidemeister III move.}
  \label{fig:reidemeister_III}
\end{subfigure}
  \caption{The double connected sum operation.}
  \label{fig:pre_unclasp}
\end{figure}

The following lemma now tells us that this choice of augmentations can still be made after the unclasp.
\begin{Lem}\label{lem:augmentations_unclasp} Let $\bar\varepsilon_1$, $\bar\varepsilon_1$ be the augmentations computed after the double connected sum, before the unclasp. Then their values on the cusps not removed by the unclasp are still valid after the unclasp operation.
\end{Lem}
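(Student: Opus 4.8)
The plan is to analyze directly how the Chekanov--Eliashberg differential changes under the unclasp, using the explicit augmentation values recorded just above. The key feature is that the unclasp is a \emph{local} move: outside a neighborhood of the two crossings it removes --- call them $b_{\rm cl,1}$ and $b_{\rm cl,2}$, the degree-$0$ crossings marked as black dots on the right of Figure~\ref{fig:double-conn-sum}, which by the computation preceding the lemma satisfy $\bar\varepsilon_j(b_{\rm cl,1}) = \bar\varepsilon_j(b_{\rm cl,2}) = 0$ for $j = 1,2$ --- the front and every rigid disk of Section~\ref{sec:CE-DGA} is unchanged. So the first task is to write down the standard local model of the clasp (sitting just to the left of the crossing $b_{\rm e}$, with the top stratum of $\Lambda_{2i+1}$ pushed over $b_{\rm e}$ by the preparatory Reidemeister~III move) and to list every rigid disk whose image meets this region, both before and after the band is lifted clear.

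Next I would compare $\partial^{\rm new}$ with $\partial^{\rm old}$ on the surviving generators. Each post-unclasp disk should fall into one of three types: (i) a pre-unclasp disk that avoids the clasp corners, hence unchanged; (ii) a disk still having a corner at $b_{\rm cl,1}$ or $b_{\rm cl,2}$, so that its word carries that generator as a factor; (iii) a genuinely new disk sweeping across the region vacated by the clasp, obtained by gluing a disk with a negative corner at one clasp crossing to a disk with a positive corner at the other. Applying $\bar\varepsilon_j$ then kills all type-(ii) contributions outright, since $\bar\varepsilon_j(b_{\rm cl,i}) = 0$; and the type-(i) together with type-(iii) contributions should reassemble into the boundary-degeneration terms of $\bar\varepsilon_j(\partial^{\rm old} a)$ with the two clasp corners removed --- the relevant boundary words read off near the clasp being $b_{\rm e}$ up to factors augmented to $0$, hence augmented to the unit $-1$, so that nothing is lost in the identification. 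Since $\bar\varepsilon_j$ is an augmentation of the pre-unclasp DGA, $\bar\varepsilon_j(\partial^{\rm old} a) = 0$, and therefore $\bar\varepsilon_j(\partial^{\rm new} a) = 0$ for every surviving generator $a$, which is the assertion of the lemma.

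The main obstacle is carrying out the disk comparison over $\Z$: one must match the pre- and post-unclasp moduli spaces and check that the gluing in case (iii) produces exactly the sign predicted by the rule $s(u) = (-1)^d$ and the cusp-capping conventions, rather than merely its mod-$2$ reduction --- this is the single place where the argument exceeds the $\Z/2$ bookkeeping of \cite{BG}, and it should be settled by a careful local inspection of the front near the clasp; once the signs are pinned down, the algebra is immediate. An alternative, and perhaps more economical, route is to factor the unclasp through a sequence of Legendrian Reidemeister moves, whose induced DGA maps were recalled from \cite{EKK} above, together with a single algebraic destabilization of the pair $(b_{\rm cl,1}, b_{\rm cl,2})$; the lemma then reduces to the fact that such a destabilization, performed along an augmentation sending the cancelling pair to $0$ and the linking word to a unit, descends to an augmentation of the destabilized DGA --- and these hypotheses are precisely what was verified in the paragraph preceding the lemma.
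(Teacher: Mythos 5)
Your overall strategy is the paper's: the two crossings destroyed by the unclasp are augmented to $0$, so the disks that acquire a corner there contribute nothing to $\bar\varepsilon_j\circ\partial$ and their disappearance is harmless; the entire content of the lemma is then to show that the \emph{new} disks created by the unclasp also contribute nothing. It is precisely at this step that your argument has a gap. You assert that the type-(i) and type-(iii) contributions ``reassemble into $\bar\varepsilon_j(\partial^{\rm old}a)$ with the two clasp corners removed,'' i.e.\ that the new disks contribute $0$ after augmentation, but the justification you offer --- that the boundary word read off near the clasp is $b_{\rm e}$ up to augmented-to-zero factors, ``hence augmented to the unit $-1$'' --- points the wrong way: a new term whose word is augmented to a unit would \emph{break} the equation $\bar\varepsilon_j\circ\partial=0$ rather than preserve it. Moreover you invoke the augmentation equation only for the surviving generator $a$ (which handles the type-(i) sum), whereas the vanishing of the type-(iii) sum requires the augmentation equation for a \emph{removed} crossing.

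The paper's mechanism is: every disk appearing after the unclasp is obtained by splicing a disk with positive puncture at $b_{2i+1,2i+2}$ into a disk with a negative puncture at that \emph{same} crossing (Figure~\ref{fig:disks_unclasp}), so the new terms of $\partial^{\rm new}a$ are the old terms containing the letter $b_{2i+1,2i+2}$ with that letter replaced by the full word $\partial^{\rm old}b_{2i+1,2i+2}$, possibly multiplied by other factors. Multiplicativity of $\bar\varepsilon_j$ and $\bar\varepsilon_j(\partial^{\rm old}b_{2i+1,2i+2})=0$ then kill all new terms at once. This is what your ``gluing'' description is reaching for, but you glue a negative corner at one clasp crossing to a positive corner at \emph{the other}, which does not produce the factorization through $\partial^{\rm old}b_{2i+1,2i+2}$. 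Finally, your proposed alternative route cannot work as stated: both removed crossings have degree $0$ (they are the black dots of Figure~\ref{fig:double-conn-sum}), so they are not a cancelling pair for an algebraic destabilization, and the unclasp is not a Legendrian isotopy --- it changes the bilinearized homology, which is the whole point of Proposition~\ref{prop:unclasp_1}.
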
 
\begin{proof} The two crossings which are removed with the unclasp are $b_{\rm II,6}$ and $b_{2i+1,2i+2}$. We want to check that, after their removal, the equalities $\bar\varepsilon_j\circ\partial=0$ still hold for $j=1,2$. Since they are both sent to 0 by our augmentations, the removal of disks which turn in $b_{\rm II,6}$ or $b_{2i+1,2i+2}$ does not affect this equality. We must therefore only check that there can be no disks which appear after the unclasp which change this equality. In fact, the disks which appear after the unclasp correspond exactly to the disks which contribute to the differential of $b_{2i+1,2i+2}$: see Figure \ref{fig:disks_unclasp}. After the unclasp, the term which was $\partial b_{2i+1,2i+2}$ is added to the differentials of the crossings on the right of the unclasp (perhaps multiplied by other terms). Then the equality $\bar\varepsilon_j\circ\partial b_{2i+1,2i+2}=0$ tells us that the disks which appear after the unclasp cannot change the equality $\bar\varepsilon_j\circ\partial=0$ after the unclasp, as they will just add 0 on the right side of the equation.
\end{proof}

We can now describe the effect of the unclasp on the homology.

\begin{Prop}\label{prop:unclasp_1}
Let $\bar\varepsilon_1$, $\bar\varepsilon_2$ be the augmentations after the unclasp operation. Then each unclasp operation on $\Lambda^{(2n)}_{\rm e,o}$ has the effect of removing two homology generators in degree 0. Hence, if we call $\Lambda^{(2n,j)}_{\rm e,o}$ the knot after $j$ unclasps, for $j \in \{0,…,n-1\}$, we get the homology:
$$
LCH^{\bar\varepsilon_1,\bar\varepsilon_2}(\bar\Lambda^{(2n,j)}) =  \Z^{2(n-j)-1}[0] \oplus \Z[-1].
$$
\end{Prop}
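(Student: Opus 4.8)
The plan is to imitate the proof of Proposition~\ref{prop:connsum}: encode the $i$-th unclasp as a short exact sequence of bilinearized complexes and read the effect on homology off the associated long exact sequence, using Lemma~\ref{lem:augmentations_unclasp} to guarantee that $\bar\varepsilon_1,\bar\varepsilon_2$ are defined throughout. Fix $i$ and let $C$, $C'$ be the bilinearized complexes (for $\bar\varepsilon_1,\bar\varepsilon_2$) of the knot immediately before and immediately after the $i$-th unclasp. They share all generators except that $C$ has the two extra degree-$0$ crossings $b_{\rm II,6}$ and $b_{2i+1,2i+2}$, both sent to $0$ by $\bar\varepsilon_1$ and $\bar\varepsilon_2$. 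The first, and main, step is to show that, after passing to the homotopy-equivalent models obtained by Gaussian-eliminating the cancelling pairs created by the Reidemeister~I and~II moves of Section~\ref{subsection:unclasp} and after shrinking the clasp region so that $b_{\rm II,6}$ and $b_{2i+1,2i+2}$ have the smallest Reeb chord lengths in the diagram, the submodule $D=\Z\langle b_{\rm II,6}\rangle\oplus\Z\langle b_{2i+1,2i+2}\rangle\simeq\Z^2[0]$ is a subcomplex of $C$ with vanishing differential. Since $\partial b_{2i+1,2i+2}=0$ in this model, the analysis in the proof of Lemma~\ref{lem:augmentations_unclasp} --- the disks created by the unclasp are exactly those contributing to $\partial b_{2i+1,2i+2}$ --- shows that the unclasp introduces no new holomorphic disks, so that $C'$ is identified with the quotient $C/D$.

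This yields the short exact sequence $0\to D\to C\to C'\to 0$, hence $LCH^{\bar\varepsilon_1,\bar\varepsilon_2}_k(C')\cong LCH^{\bar\varepsilon_1,\bar\varepsilon_2}_k(C)$ for $k\neq 0,1$ together with the exact sequence
\[
0 \to LCH^{\bar\varepsilon_1,\bar\varepsilon_2}_1(C) \to LCH^{\bar\varepsilon_1,\bar\varepsilon_2}_1(C') \to \Z^2 \xrightarrow{\iota} LCH^{\bar\varepsilon_1,\bar\varepsilon_2}_0(C) \to LCH^{\bar\varepsilon_1,\bar\varepsilon_2}_0(C') \to 0 ,
\]
with $\iota$ induced by $D\hookrightarrow C$. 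The remaining point is to check that $\iota$ is injective onto a rank-$2$ direct summand; equivalently, that $[b_{\rm II,6}]$ and $[b_{2i+1,2i+2}]$ are $\Z$-independent in $LCH_0^{\bar\varepsilon_1,\bar\varepsilon_2}(C)$ and extend to a basis of it, which by induction is the free module $\Z^{2(n-j)-1}$ (the base case $j=0$ being the computation of $LCH^{\bar\varepsilon_1,\bar\varepsilon_2}(\Lambda^{(2n)}_{\rm e,o})$ in Section~\ref{subsection:unclasp}). I would verify this by exhibiting, among the generators created in the various Reidemeister moves and the double connected sum, explicit degree-$1$ chains whose bilinearized differentials represent $b_{\rm II,6}$ and $b_{2i+1,2i+2}$ modulo the other surviving degree-$0$ cycles, keeping track of the signs recorded in Section~\ref{subsection:unclasp}. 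Granting this, exactness gives $LCH^{\bar\varepsilon_1,\bar\varepsilon_2}_1(C')\cong LCH^{\bar\varepsilon_1,\bar\varepsilon_2}_1(C)$ and $LCH^{\bar\varepsilon_1,\bar\varepsilon_2}_0(C')\cong LCH^{\bar\varepsilon_1,\bar\varepsilon_2}_0(C)/\iota(\Z^2)$, a free module of rank $2(n-j)-3$; the isomorphisms in the remaining degrees then complete the induction, yielding the stated formula.

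The hard part, I expect, is the chain-level bookkeeping underlying the first two paragraphs: arranging the models so that $D$ really is a zero-differential subcomplex (whence the unclasp creates no disks and $C'=C/D$), and then establishing injectivity of $\iota$ --- that is, that neither removed crossing bounds and that the two classes remain $\Z$-independent and primitive in $LCH_0$ before the unclasp. This is exactly what forces the homology to drop by $\Z^2$ in degree $0$ and no more, and in particular prevents torsion from being created; everything else is formal homological algebra or has already been done in Sections~\ref{subsection:2n-copy} and~\ref{subsection:unclasp}.
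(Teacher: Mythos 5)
Your overall strategy---realize the unclasp as a short exact sequence $0\to D\to C\to C'\to 0$ with $D=\Z\langle b_{\rm II,6}\rangle\oplus\Z\langle b_{2i+1,2i+2}\rangle$ and read off the homology from the long exact sequence---rests on a premise that is false for this diagram. The bilinearized differential of $b_{2i+1,2i+2}$ is \emph{not} zero: by equation~\eqref{eq:computation_diff} one has $\partial^{\bar\varepsilon_1,\bar\varepsilon_2} b_{2i+1,2i+2}=b_{2i+1,2i+3}$ (and $b_{2i,2i+2}$ for the reversed order of augmentations), so $D$ is not even a subcomplex, let alone one with vanishing differential. Nor can this be repaired by shrinking the clasp region: the very disk computing $\partial b_{2i+1,2i+2}=b_{2i+1,2i+3}$ forces the action of $b_{2i+1,2i+2}$ to exceed that of $b_{2i+1,2i+3}$, so this chord cannot be made action-minimal without changing the diagram combinatorially, and Gaussian elimination produces an algebraic model in which the geometric identification of the quotient with the post-unclasp diagram is lost. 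The second half of the premise fails too: the unclasp \emph{does} create new holomorphic disks --- precisely those in bijection with the disks having a positive puncture at $b_{2i+1,2i+2}$ --- and these add the term $b_{2i+1,2i+3}$ (resp.\ $b_{2i,2i+2}$) to the differential of $b_{\rm II,8}$. Hence $C'$ is not the quotient $C/D$, and the exact sequence you write down is not available. Finally, even granting the sequence, the decisive step (that $\iota$ is injective onto a primitive rank-$2$ sublattice of $LCH_0(C)$) is deferred to an unexecuted computation, and the chains you propose to exhibit --- degree-$1$ chains whose differentials \emph{represent} the two classes --- would, if anything, show these classes are boundaries, i.e.\ that $\iota$ vanishes, which is the opposite of what you need.

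The argument I give avoids any such direct model of the unclasp. One first observes that only the degree-$0$ and degree-$1$ differentials can change (the removed crossings are augmented to $0$, and the new disks only feed into $\partial b_{\rm II,8}$ and $\partial a_{2i+1}$); one then checks that the \emph{image} of $\partial_0$ is unchanged for both orders of the augmentations, because the new term $b_{2i+1,2i+3}$ (resp.\ $b_{2i,2i+2}$) was already in the image via $b_{2i+2,2i+3}$ (resp.\ $b_{2i,2i+1}$). This pins down $LCH_{-1}$ and, via the duality sequence~\eqref{eq:dualityseq-bilin} together with the Universal Coefficient Theorem, also $LCH_1$ and (using Corollary~\ref{cor:torsion_iso} and the surjectivity of $\tau_0$) the torsion of $LCH_0$. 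The Euler characteristic, which drops by $2$, then forces the degree-$0$ free rank to drop by exactly $2$. If you want to salvage a direct approach along your lines, you would have to work with the actual post-unclasp differential (including the new disks) rather than with a quotient complex, at which point you are essentially redoing this bookkeeping.
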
 

\begin{proof}
We will argue for only one unclasp ($j=1$), the arguments being the same for multiple unclasps. We keep the conventions of the previous paragraphs, and so assume the unclasp operation is done at the level of the double connected sum with the unknots $2i+2$ and $2i+1$, and the removed generators are $b_{2i+1,2i+2}$ and $b_{\rm II,6}$.

The unclasp may change the differential in three ways: first, it removes the image of $b_{2i+1,2i+2}$ and $b_{\rm II,6}$ from the image of the differential. This change only affects the differential in degree 0. Then, it may remove disks which turn in $b_{2i+1,2i+2}$ or $b_{\rm II,6}$. Since both $b_{2i+1,2i+2}$ and $b_{\rm II,6}$ are sent to 0 by both $\varepsilon_1$ and $\varepsilon_2$, any disks which turn in these crossings must contribute exactly them to the differential, and so only the degree 1 differential is affected.
Finally, it may add new disks which pass through the space created by the unclasp, with boundary on the higher strand of what was the unknot $2i+1$ and lower strand of what was the unknot $2i+2$. Such disks cannot exist before the unclasp. One can see in Figure \ref{fig:reidemeister_III} that only the differentials of $b_{\rm II,8}$ and $a_{2i+1}$ may count disks which pass by this space. This is enough to show that only the differentials in degree 0 and 1 are affected by this case. Therefore, the only homology groups which can change are those affected by the degree 0 and degree 1 differential. 

Let us show that the image of the degree 0 differential is unchanged. We will argue for both $\partial^{\varepsilon_1,\varepsilon_2}$ and $\partial^{\varepsilon_2,\varepsilon_1}$, as the other order will be useful later in the proof. Firstly, for both orders of augmentations, the bilinearized differential of $b_{\rm II,6}$ is clearly 0, so removing it does not change the image. As for $b_{2i+1,2i+2}$, we have on one hand $$\partial^{\varepsilon_1,\varepsilon_2} b_{2i+1,2i+2}=b_{2i+1,2i+3}=\partial^{\varepsilon_1,\varepsilon_2}b_{2i+2,2i+3},$$ and on the other hand $$\partial^{\varepsilon_2,\varepsilon_1}b_{2i+1,2i+2}=b_{2i,2i+2}=\partial^{\varepsilon_2,\varepsilon_1}b_{2i,2i+1},$$ (see equation \eqref{eq:computation_diff}), and so in either case respectively $b_{2i+1,2i+3}$ and $b_{2i,2i+2}$ stay in the image after the removal of $b_{2i+1,2i+2}$. Furthermore, as one can see thanks to Figure \ref{fig:disks_unclasp}, the disks which contribute to the differential of $b_{2i+1,2i+2}$ end up in correspondence with the disks which are added after the unclasp and contribute to the differential of $b_{\rm II,8}$. The appearance of such disks then either leaves the bilinearized differentials unchanged or adds $b_{2i+1,2i+3}$ or $b_{2i,2i+2}$ (depending on the order of the augmentations) as a term in the image of the differential of elements to the right of the unclasp, but since $b_{2i+1,2i+3}$ and $b_{2i,2i+2}$ are already in the image, this does not change the vector space $\text{im}( \partial_0^{\varepsilon_1,\varepsilon_2})$. We may then deduce that $LCH^{\varepsilon_1,\varepsilon_2}_{-1}(\Lambda_{\rm e,o}^{(2n,j)})=LCH^{\varepsilon_1,\varepsilon_2}_{-1}(\Lambda_{\rm e,o}^{(2n)})$, and the same holds when we switch augmentations.

 \begin{figure}[h]
 \labellist
\small\hair 2pt
\pinlabel {$b_{2i+3,2i+2}$} [bl] at -15 60 
\pinlabel {$b_{2i+1,2i}$} [bl] at -11 31 
\pinlabel \rotatebox{-20}{$b_{2i+1,2i+3}$} [bl] at 104 50
\pinlabel \rotatebox{30}{$b_{2i,2i+2}$} [bl] at 103 35 
\endlabellist
 \includegraphics[width=8cm, height=7cm]{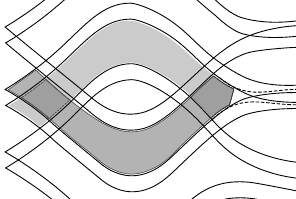}
  \caption{Parts of disks that can contribute to the bilinearized differential and which can appear after the unclasp operation, or in the differential of $b_{2i+1,2i+2}$ before the unclasp.}
  \label{fig:disks_unclasp}
\end{figure}

Observe that, by consequence of the long exact sequence \eqref{eq:dualityseq-bilin}, for any Legendrian link $\Lambda$ and pair of augmentations $\varepsilon_1,\varepsilon_2$, $LCH_k^{\varepsilon_1,\varepsilon_2}(\Lambda)\cong LCH^{-k}_{\varepsilon_2,\varepsilon_1}(\Lambda)$ for $k\neq0,-1$. Furthermore, due to the Universal Coefficient Theorem, $$LCH^{k}_{\varepsilon_1,\varepsilon_2}(\Lambda)\cong \Z^{b_k(\Lambda,\varepsilon_1,\varepsilon_2)}\oplus T_{k-1}(\Lambda,\varepsilon_1,\varepsilon_2),$$ where $b_k(\Lambda,\varepsilon_1,\varepsilon_2)$ is the dimension of the free part of $LCH_{k}^{\varepsilon_1,\varepsilon_2}(\Lambda)$ and $T_{k-1}(\Lambda,\varepsilon_1,\varepsilon_2)$ is the torsion part of $LCH_{k-1}^{\varepsilon_1,\varepsilon_2}(\Lambda)$. For $k=1$, this then gives us $LCH_{1}^{\varepsilon_1,\varepsilon_2}(\Lambda)\cong \Z^{b_{-1}(\Lambda,\varepsilon_2,\varepsilon_1)}\oplus T_{-2}(\Lambda,\varepsilon_2,\varepsilon_1)$. To show that the homology in degree 1 is unchanged by the unclasp, it then suffices to show that the homology groups in degree $-1$ and $-2$ are unchanged by the unclasp when we reverse the augmentations. Since the arguments showing that only the differentials in degree 0 and 1 are affected by the unclasp did not depend on the order of the augmentations, the homology group in degree $-2$ is left unchanged regardless of the order of the augmentations we chose. Furthermore, the previous paragraph shows that the homology group in degree $-1$ does not change either. 

The only homology group which can then change due to the unclasp operation is the one in degree 0. Corollary \ref{cor:torsion_iso} shows that for any Legendrian knot $\Lambda$ and augmentations $\varepsilon_1,\varepsilon_2$, $T_0(\Lambda,\varepsilon_1,\varepsilon_2)\cong T_{-1}(\Lambda,\varepsilon_2,\varepsilon_1)$ if $\tau_0$ is surjective. Here we have $\tau_0(b_{2i+1,2i})=\varepsilon_2(b_{2i+1,2i})-\varepsilon_1(b_{2i+1,2i})=1$ for any $i$, which gives us the surjectivity. Since $LCH^{\varepsilon_2,\varepsilon_1}_{-1}(\Lambda_{\rm e,o}^{(2n,j)})=LCH^{\varepsilon_2,\varepsilon_1}_{-1}(\Lambda_{\rm e,o}^{(2n)})$, this then implies that the torsion part of $LCH^{\varepsilon_1,\varepsilon_2}_{0}(\Lambda_{\rm e,o}^{(2n,j)})$ must be the same as the torsion part of $LCH^{\varepsilon_1,\varepsilon_2}_{0}(\Lambda_{\rm e,o}^{(2n)})$, ie must stay trivial. Therefore, only the free part of $LCH^{\varepsilon_1,\varepsilon_2}_{0}(\Lambda_{\rm e,o}^{(2n)})$ can change. Since the Euler characteristic associated to the bilinearized Legendrian contact homology of $\Lambda$ with regards to $\varepsilon_1$ and $\varepsilon_2$ diminishes by two after the unclasp operation, we may then deduce that $LCH^{\varepsilon_1,\varepsilon_2}_{0}(\Lambda_{\rm e,o}^{(2n)})$ must lose two free generators. This concludes our proof.\end{proof}

We perform this unclasp operation $n-1$ times, which per Proposition \ref{prop:unclasp_1} gives us the homology:
$$
LCH^{\bar\varepsilon_1,\bar\varepsilon_2}(\Lambda^{(2n,n-1)}_{\rm e,o}) = \Z[0] \oplus \Z[-1].
$$

Since the unique degree 1 generator in fact corresponds to the alternate sums of the uneven cusps, $\tau_1([a_{\text{odd}}])=[\Lambda_{\text{odd}}]$, the fundamental class of the odd connected component, and so $\rho_1$ is surjective. Therefore, if we perform a usual connected sum between these two knots, we obtain a new Legendrian knot $\Lambda^{\rm min}_{2n}$, for which there exists augmentations which extend the previous ones by sending the new crossing to $-1$, as no disks may go through this crossing without turning in it. So by Proposition~\ref{prop:connsum} we obtain:
$$
LCH^{\bar\varepsilon_1,\bar\varepsilon_2}(\Lambda^{\rm min}_{2n}) = \Z[0].
$$

 \begin{figure}[h]
\labellist
\small\hair 2pt
\pinlabel {$\Lambda_{\rm e,o}^{(6,2)}$} [bl] at 105 210
\pinlabel {$\Lambda_6^{\rm min}$} [bl] at 430 210
\endlabellist
 \includegraphics[width=8cm, height=6cm]{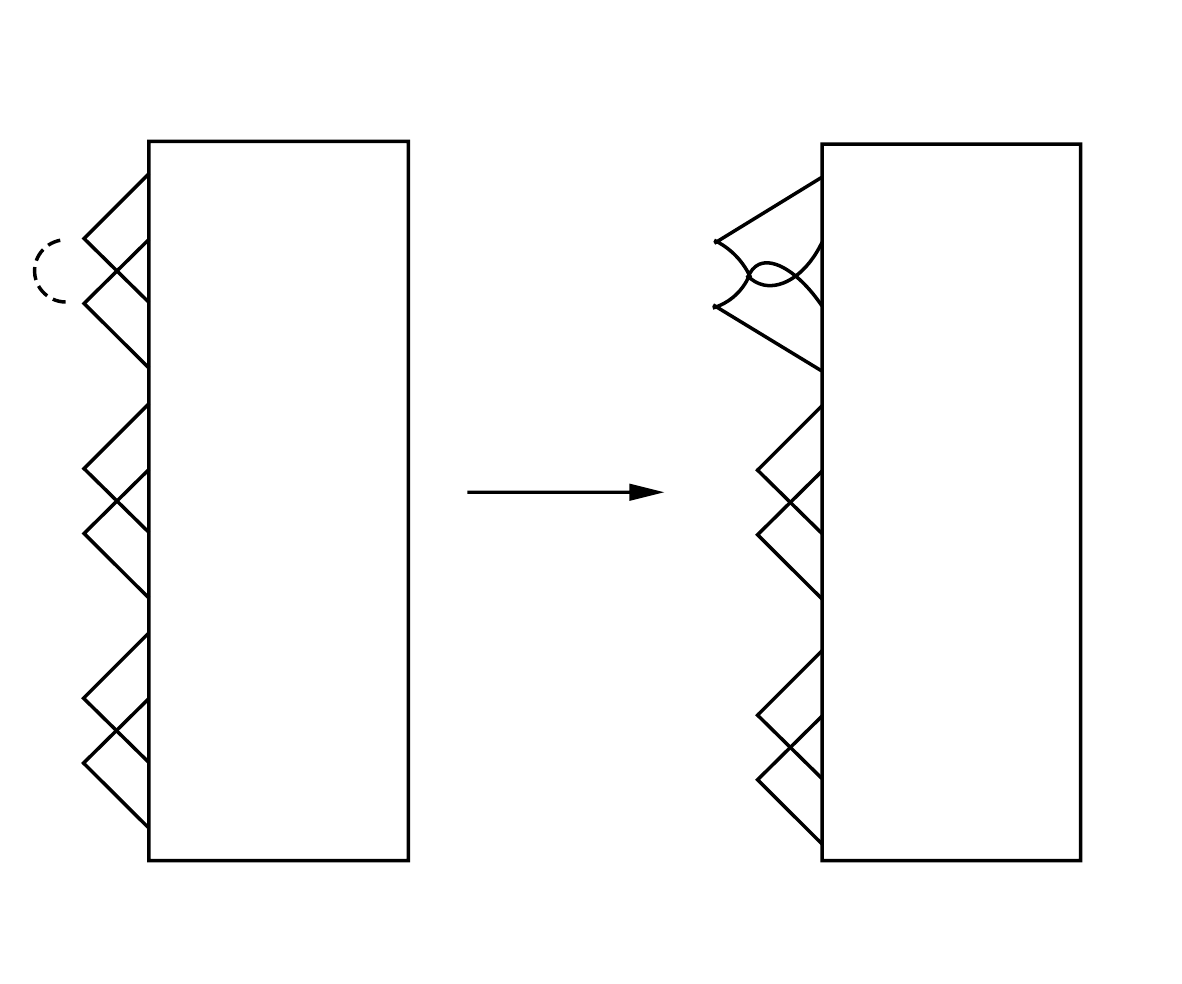}
  \caption{The connected sum taking us from $\Lambda_{\rm e,o}^{(6,2)}$ to $\Lambda_6^{\rm min}$.}
  \label{fig:last_sc}
\end{figure}
\begin{Rem}\label{rmk:looks_like_tr} If we perform the last connected sum at the level of the two top cusps on the left, we obtain something that ressembles a trefoil at the top. See Figure \ref{fig:last_sc}. In fact, for $n=1$, this recovers the construction of the trefoil knot.\end{Rem}
%%%%%%%%%%%%%%%%
 \subsection{Building the free part of bilinearized Legendrian contact homology}
 \label{sec:free_geog_bilin}
 %%%%%%%%%%%%%%%%

We now follow the constructions introduced in~\cite{BG} in order to create pairs of free generators in arbitrary degrees. This construction was described
in~\cite[Section~4.4]{BG} with coefficients in $\Z_2$; we simply have to verify that this construction does not generate any torsion when performed in 
dimension $3$ with coefficients in $\Z$. To this end, for any $k = 1, \ldots, 2n$, it suffices to consider a single standard Legendrian unknot 
$\Lambda_{0}$ interlaced by the $k$ bottom strands of $\Lambda^{\rm min}_{2n}$, corresponding to the bottom strand of each of the former components $\Lambda_1,
\ldots, \Lambda_k$. We obtain $2k+1$ additional generators in the bilinearized complex: the right cusp $a'_0$ of $\Lambda_0$, the intersection of
the upper strand of $\Lambda_0$ with the $i$th bottom strand of $\Lambda^{\rm min}_{2n}$ as the former is going up: $c'_i$, and as it
is going down: $d'_i$, for $i = 1, \ldots, k$. This is illustrated by Figure~\ref{fig:free_geog_bilin}.

\begin{figure}
\labellist
\small\hair 2pt
\pinlabel {$\Lambda_0$} [bl] at 20 0
\pinlabel {$c'_1$} [bl] at 70 100
\pinlabel {$d'_1$} [bl] at 215 100
\pinlabel {$c'_2$} [bl] at 75 130
\pinlabel {$d'_2$} [bl] at 210 130
\pinlabel {$c'_k$} [bl] at 90 185
\pinlabel {$d'_k$} [bl] at 195 185
\pinlabel {$a_0$} [bl] at 290 20
\pinlabel {$\left.  \rule{0pt}{0.9cm} \right\} k$}  [bl] at 310 93
\endlabellist
  \centerline{\includegraphics[width=5cm]{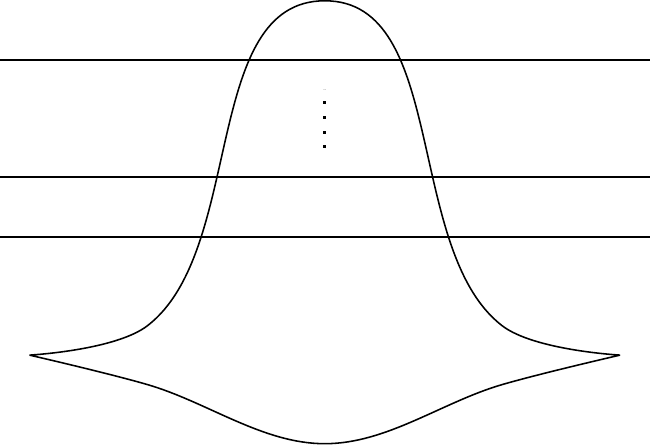}}
  \caption{A standard Legendrian unknot interlaced with $k$ parallel strands.}
  \label{fig:free_geog_bilin}
\end{figure}

The gradings of these generators are $|a_0|= 1$, $|c'_i| = |c'_1|+i-1$ and $|d'_i| = -|c'_i|$ for $i= 1, \ldots, k$. We extend the augmentations 
$\bar\varepsilon_1 $ and $\bar\varepsilon_2$ of $\Lambda^F_{2n}$ by zero on these new generators. For all $i = 1, \ldots, k$, 
there is a unique rigid holomorphic disk with a left-facing corner at $c'_i$ or $d'_i$ and with at most one unaugmented upward-facing corner:
it has upward-facing corners at $c'_{i-1}$ or $d'_{i+1}$ and at $b_{i,i-1}$ or $b_{i+1,i}$.
The bilinearized differential of the new generators is therefore given by 
$$
\partial^{\bar\varepsilon_1,\bar\varepsilon_2} c'_i = -\bar\varepsilon_1(b_{i,i-1}) c'_{i-1}
\qquad \textrm {and} \qquad 
\partial^{\bar\varepsilon_1,\bar\varepsilon_2} d'_i = (-1)^{|d'_{i+1}|}\bar\varepsilon_2(b_{i+1,i}) d'_{i+1},
$$
for $i = 1, \ldots, k$, and $\partial^{\bar\varepsilon_1,\bar\varepsilon_2} a_0 = 0$. 
Hence, the bilinearized homology has three additional free generators: $[a_0]$ in degree $1$, $[c'_{k}]$ if $k$ is odd 
or $[d'_{k}]$ if $k$ is even, and $[d'_{1}]$. By varying $k$ and the Maslov potential of $\Lambda_0$, we can realize any pairs of degrees
for the last two generators. 

Finally, we perform a connected sum between $\Lambda^{\rm min}_{2n}$ and $\Lambda_0$. Due to the discussion before Proposition~\ref{prop:connsum}, the augmentations extend to the connected sum by sending the new crossing to $-1$, as no disk can appear which goes through the new crossing without turning in it. Since $\tau_1([a_0]) = [\Lambda_0]$, the map $\rho_1$ is surjective
and by Proposition~\ref{prop:connsum} the effect of this connected sum is to remove the free generator $[a_0]$ of degree $1$ from bilinearized LCH.
We therefore obtained the desired effect of this operation on bilinearized LCH, so that as in~\cite{BG} its free part can be any graded free module of odd rank with at least 
one generator of degree $0$.

%%%%%%%%%%%%%%%%
 \subsection{Building the torsion part of bilinearized Legendrian contact homology.}\label{subsection:building_torsion}
 %%%%%%%%%%%%%%%%

 We consider the following link $\Lambda_{m,k}$, as seen in Figure \ref{fig:figure_torsion}: a trefoil knot interlaced with an unknot, such that the front projections intersect at the right ``eye'' of the trefoil knot. The trefoil knot has a Maslov potential shift by $k$ with regards to the unknot, so that $|b_7|=-|b_6|=k$.
\begin{figure}
\labellist
\small\hair 2pt
\pinlabel {$b_1$} [bl] at 67 54
\pinlabel {$b_2$} [bl] at 137 46
\pinlabel {$b_3$} [bl] at 210 54
\pinlabel {$b_5$} [bl] at 192 81
\pinlabel {$b_6$} [bl] at 110 118
\pinlabel {$b_7$} [bl] at 212 121
\pinlabel {$b_4$} [bl] at 148 79
\pinlabel {$a_1$} [bl] at 267 142
\pinlabel {$a_2$} [bl] at 267 100
\pinlabel {$a_3$} [bl] at 267 42
\endlabellist
  \centerline{\includegraphics[width=7cm]{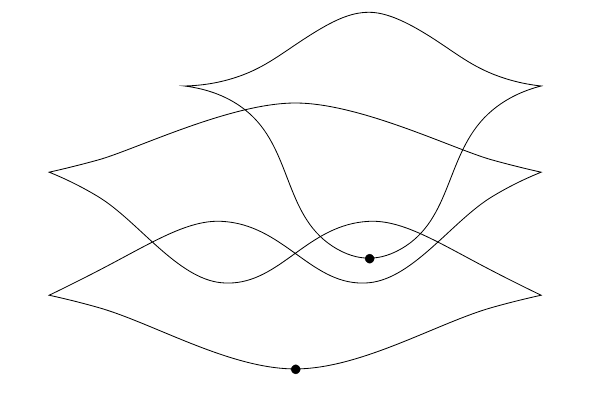}}
  \caption{A trefoil knot interlaced with an unknot, each with a base point.}
  \label{fig:figure_torsion}
\end{figure}

We choose two augmentations $\varepsilon_1$ and $\varepsilon_2$ which send every crossing to $0$, except for:
\begin{align*}
\varepsilon_1(b_1)&=-1 & \varepsilon_2(b_1)&=0,\\
\varepsilon_1(b_2)&=m\in\Z\setminus\{0,1,2\} & \varepsilon_2(b_2)&=0,\\
\varepsilon_1(b_3)&=0 & \varepsilon_2(b_3)&=-1.
\end{align*}
Also, both augmentations send each $t_i,i=1,2,3$, to $-1$.
With this choice of augmentations, we have the following differentials:
\begin{align*}
\partial^{\varepsilon_1,\varepsilon_2} a_1 &= 0,\\
\partial^{\varepsilon_1,\varepsilon_2} a_2 &= b_1 + b_3 + b_2 - mb_3,\\
\partial^{\varepsilon_1,\varepsilon_2} a_3 &= b_1 + b_3,\\
\partial^{\varepsilon_1,\varepsilon_2} b_7 &= (m-1)b_5,\\
\partial^{\varepsilon_1,\varepsilon_2} b_4 &= (-1)^{k}b_6,\\
\partial^{\varepsilon_1,\varepsilon_2} b_1 &= \partial^{\varepsilon_1,\varepsilon_2} b_2 = \partial^{\varepsilon_1,\varepsilon_2}b_3=0 = \partial^{\varepsilon_1,\varepsilon_2}b_5 =\partial^{\varepsilon_1,\varepsilon_2}b_6=0.
\end{align*}
We therefore have a unique free degree 1 generator, $[a_1]$, a unique free degree 0 generator, $[b_1]=-[b_3]$ (and $[b_2]=m[b_3]$), and a unique torsion generator, $b_5$, which gives us $(m-1)$-torsion in degree $k-1$:

$$
LCH^{\varepsilon_1, \varepsilon_2}(\Lambda_{m,k}) = \Z[0] \oplus \Z[-1] \oplus \Z/(m-1)\Z[1-k].
$$

In order to remove the unnecessary free generators, we proceed similarly to the case of the $2n$-copy. We interlace $\Lambda_{m,k}$ with another Legendrian trefoil knot $\Lambda_{\rm tr}$, such the front projection of $\Lambda_{m,k}$ intersects the left eye of $\Lambda_{\rm tr}$, as seen in Figure \ref{fig:figure_torsion_link}, to get a link $\tilde\Lambda^T_{m,k}$.

\begin{figure}[h]
  \centerline{\includegraphics[width=7cm]{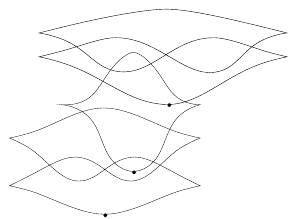}}
  \caption{Link constituted of $\Lambda_{\rm tr}$ and $\Lambda_{m,k}$.}
  \label{fig:figure_torsion_link}
\end{figure}

We extend the augmentations to the link by sending the far-left crossing of $\Lambda_{\rm tr}$ to $-1$ by each augmentation, so the crossings between $\Lambda_{\rm tr}$ and $\Lambda_{m,k}$ cancel out in homology. We also send the middle crossing of $\Lambda_{\rm tr}$ to $1$ for both augmentations. $\Lambda_{\rm tr}$ therefore contributes one generator in degree 1, and two generators in degree 0, so that we have the following homology for the link:

\begin{align*}
LCH^{\varepsilon_1, \varepsilon_2}(\tilde\Lambda^T_{m,k}) &= LCH^{\varepsilon_1, \varepsilon_2}(\Lambda_{m,k}) \oplus \Z^2[0] \oplus \Z[-1] \\
&= \Z^3[0] \oplus \Z^2[-1] \oplus \Z/(m-1)\Z[1-k].
\end{align*}

We now perform a double connected sum, one between the unknot in $\Lambda_{m,k}$ and $\Lambda_{\rm tr}$, at the level of the top right cusp of $\Lambda_{\rm tr}$, and one between $\Lambda_{m,k}$, at the level of its top right cusp, and $\Lambda_{\rm tr}$, at the level its the bottom right cusp, which gives us a knot $\Lambda^T_{m,k}$. More precisely, we start by performing a series of Reidemeister I moves on the lower knot, near the top cusp, so that the top strand has the Maslov potential of the lower strand of the unknot. We then perform a two-copy of a Reidemeister move on the lower two knots, followed by two Reidemeister two moves so that the upper strand of the unknot crosses twice the second lower strand of $\Lambda_{\rm tr}$. We can now do a double connected sum, one between the unknot in $\Lambda_{m,k}$ and $\Lambda_{\rm tr}$, and one between the trefoil knot in $\Lambda_{m,k}$ and $\Lambda_{\rm tr}$. See Figure \ref{fig:figure_torsion_unclasp_2}. The augmentations extend after the connected sum by sending the new crossings each to -1.

Per Proposition \ref{prop:connsum}, the connected sum between the unknot and the $\Lambda_{\rm tr}$ removes a free degree 1 generator, since the image by $\tau_{1,1}-\tau_{1,2}$ of the fundamental class of the unknot is 1. Once this connected sum has been performed, we have a link formed of two knots, one of whom (obtained by the first connected component) carries a fundamental class, which is obtained from the sum of the fundamental class of the unknot and the fundamental class of $\Lambda_{\rm tr}$, and the other which does not. Again, the image of the fundamental class by $\tau_{1,1}-\tau_{1,2}$ is 1, and so the morphism is surjective. The second connected sum then also removes a degree 1 generator. 
We are left with a $(m-1)$-torsion generator in degree $k$ and three free generators in degree 0. To obtain the geography, we want a single degree 0 generator, so we must get rid of two. We once again perform the unclasp operation like in Subsection \ref{subsection:2n-copy}. That is, we get rid of two crossings, the very right one of $\Lambda_{\rm tr}$ ($b_8$) and the one on the right of it created by the double connected sum ($e_4$), by raising and/or lowering one of the two strands to which these crossings belong to. See Figure \ref{fig:figure_torsion_unclasp_2}.

\begin{figure}
\labellist
\small\hair 2pt
\pinlabel {$b_{1}$} [bl] at 30 25
\pinlabel {$b_{2}$} [bl] at 64 20
\pinlabel {$b_{3}$} [bl] at 102 28
\pinlabel {$b_{4}$} [bl] at 70 42
\pinlabel {$b_{5}$} [bl] at 96 52
\pinlabel {$b_{6}$} [bl] at 26 60
\pinlabel {$b_{7}$} [bl] at 194 51
\pinlabel {$b_{8}$} [bl] at 136 104
\pinlabel {$b_{9}$} [bl] at 84 110
\pinlabel {$b_{10}$} [bl] at 28 114
\pinlabel {$a_{1}$} [bl] at 128 20
\pinlabel {$a_{2}$} [bl] at 214 49
\pinlabel {$a_{3}$} [bl] at 214 59
\pinlabel {$a_{k+2}$} [bl] at 206 102
\pinlabel {$a_{k+3}$} [bl] at 206 110
\pinlabel {$a_{k+4}$} [bl] at 206 116
\pinlabel {$b_{\rm II,1}$} [bl] at 195 93
\pinlabel {$b_{\rm II,8}$} [bl] at 183 114
\pinlabel {$b_{\rm II,3}$} [bl] at 156 94
\pinlabel {$b_{\rm II,6}$} [bl] at 168 108
\endlabellist
  \centerline{\includegraphics[width=11cm]{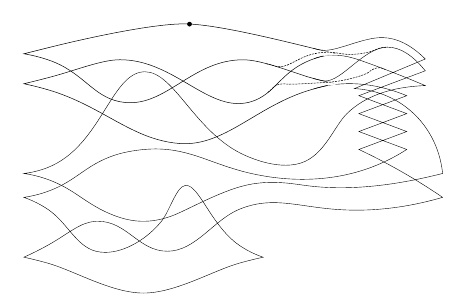}}
  \caption{The unclasp operation: the dotted lines replace the whole ones.}
  \label{fig:figure_torsion_unclasp_2}
\end{figure}

Let us check that the unclasp has the desired effect, ie. deleting two degree 0 generators. The computations for the augmentations are similar to subsection \ref{subsection:unclasp}. Indeed, the series of Reidemeister I moves on the lower knot creates a series of crossings which are all sent to $-1$ by the augmentations, as well as the cusp created by the Reidemeister I move on the unknot. We then perform two Reidemeister II moves, which create four new cusps $b_{\rm II,1}$ to $b_{\rm II,4}$. See Figure \ref{fig:double_reidemeister_1}. Note that this time, as opposed to subsection \ref{subsection:unclasp}, only one disk contributes to the differential of $b_{\rm II,1}$, with a corner in $b_{\rm II,2}$. Same for the differential of $b_{\rm II,4}$, which counts only a disk with a corner in $b_{\rm II,3}$. The DGA map given in \cite[section 6.3.3.]{EKK} then sends $b_{\rm II,1},…,b_{\rm II,4}$ to 0, and so the augmentation given by the pullback of the previous augmentations by this map send them all to 0. The following Reidemeister III move then preserves all the augmentations. The next moves are as in Figure \ref{fig:pre_unclasp}. We perform again two Reidemeister II moves. Like in subsection \ref{subsection:unclasp}, the augmentations send all these new crossings to 0. We then do two connected sums, which create two new crossings which are each sent to $-1$ by either augmentation, and a Reidemeister III move which does not change the values of the augmentations. We therefore end up with a similar situation to subsection \ref{subsection:unclasp}, where in particular, both cusps removed by the unclasp are sent to 0 by either augmentation. 

We may then deduce that Lemma \ref{lem:augmentations_unclasp} still holds in this setup. Indeed, the arguments used in the proof depend only on what the knot looks like to the right of the unclasp, which is the same in this case, and on the fact that the two generators which are removed by the unclasp get sent to 0 by the augmentations. Therefore, the unclasp does not change the values of the augmentations of the generators which are not removed.

We can now prove that the unclasp has the desired effect of removing two degree 0 generators. 

\begin{Prop}\label{prop:unclasp_2}
Let $\bar\varepsilon_1$, $\bar\varepsilon_2$ be the augmentations after the unclasp operation. Then the unclasp operation has for only effect the removal of two free degree 0 generators from $LCH^{\bar\varepsilon_1, \bar\varepsilon_2}_{0}(\Lambda^T_{m,k})$ . \end{Prop}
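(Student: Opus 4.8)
The plan is to run the argument of the proof of Proposition~\ref{prop:unclasp_1} in this setting. As explained just above, the portion of the front near the region modified by the unclasp is a copy of the one in Subsection~\ref{subsection:unclasp}; in particular, the two crossings removed by the unclasp (one of them $b_8$, both sent to $0$ by $\bar\varepsilon_1$ and $\bar\varepsilon_2$, by Lemma~\ref{lem:augmentations_unclasp}) play the roles of $b_{2i+1,2i+2}$ and $b_{\rm II,6}$ there. So most of that argument carries over after relabelling, and only two ingredients specific to $\Lambda^T_{m,k}$ need a fresh verification: the surjectivity of $\tau_0$ and the Euler characteristic count.

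First I would argue, for either ordering of the two augmentations, that the unclasp changes the bilinearized differential only in degrees $0$ and $1$. Removing the two degree-$0$ crossings affects only $\partial_0$, whose domain loses them, and $\partial_1$, whose codomain loses them; the disks turning at these crossings are harmless, since a crossing augmented to $0$ can contribute to a bilinearized differential only when it is the output generator, and those outputs are again of degree $0$; and the new disks threading the region opened by the unclasp enter, by inspection of Figure~\ref{fig:figure_torsion_unclasp_2} (compare Figure~\ref{fig:reidemeister_III}), only the differentials of $b_{\rm II,8}$ and of a degree-$1$ cusp. Hence $LCH^{\bar\varepsilon_1,\bar\varepsilon_2}_k$ and $LCH^{\bar\varepsilon_2,\bar\varepsilon_1}_k$ are unchanged for every $k \notin \{-1,0,1\}$.

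Next I would show that $\text{im}(\partial_0)$ is unchanged for both orderings, exactly as in Proposition~\ref{prop:unclasp_1}: one of the two removed crossings has vanishing bilinearized differential, while the differential of the other, $b_8$, hits an element of $C_{-1}$ that is also hit by the differential of a neighbouring generator, so deleting them does not shrink $\text{im}(\partial_0)$; moreover the newly created disks are in bijection with the disks contributing to $\partial b_8$, so they can only add to $\text{im}(\partial_0)$ terms already present there. Since $C_{-1}$ and $\partial_{-1}$ are untouched, $LCH^{\bar\varepsilon_1,\bar\varepsilon_2}_{-1}$ and $LCH^{\bar\varepsilon_2,\bar\varepsilon_1}_{-1}$ are unchanged. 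By~\eqref{eq:dualityseq-bilin} and the Universal Coefficient Theorem, $LCH_1^{\bar\varepsilon_1,\bar\varepsilon_2}$ is built from the free part of $LCH^{\bar\varepsilon_2,\bar\varepsilon_1}_{-1}$ and the torsion part of $LCH^{\bar\varepsilon_2,\bar\varepsilon_1}_{-2}$ (as in the proof of Proposition~\ref{prop:unclasp_1}), both of which are unchanged; hence so is $LCH_1^{\bar\varepsilon_1,\bar\varepsilon_2}$.

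At this point only $LCH_0^{\bar\varepsilon_1,\bar\varepsilon_2}$ may have changed. As in Proposition~\ref{prop:unclasp_1}, $\tau_0$ is surjective: at chain level it is $\bar\varepsilon_2 - \bar\varepsilon_1$ by Proposition~\ref{prop:tau}, and $(\bar\varepsilon_2 - \bar\varepsilon_1)(b_3) = -1$ on the degree-$0$ crossing $b_3$ of $\Lambda_{m,k}$, which remains a cycle as its differential is unaffected by the operations, all performed away from it. Corollary~\ref{cor:torsion_iso} then gives $T_0(\Lambda^T_{m,k},\bar\varepsilon_1,\bar\varepsilon_2) \cong T_{-1}(\Lambda^T_{m,k},\bar\varepsilon_2,\bar\varepsilon_1)$, and the right-hand side is unchanged by the unclasp; so the torsion of $LCH_0^{\bar\varepsilon_1,\bar\varepsilon_2}$ is unchanged and only its free rank can move. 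Finally, since the unclasp deletes two degree-$0$ generators from the complex, the Euler characteristic of $(C(\Lambda^T_{m,k}),\partial^{\bar\varepsilon_1,\bar\varepsilon_2})$ drops by $2$, which forces the free rank of $LCH_0^{\bar\varepsilon_1,\bar\varepsilon_2}$ to drop by exactly $2$. I expect the main obstacle to be the local disk analysis underpinning the first two steps --- listing the rigid holomorphic disks that appear near $b_8$ and the region opened by the unclasp and checking that they neither enlarge $\text{im}(\partial_0)$ nor affect differentials outside degrees $0$ and $1$ --- but since that part of the front reproduces the one in Subsection~\ref{subsection:unclasp}, this is the computation already carried out there, after the relabelling indicated above.
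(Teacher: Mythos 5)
Your proposal follows the paper's proof essentially verbatim: both reduce to the argument of Proposition~\ref{prop:unclasp_1}, isolate the same two points needing fresh verification (the behaviour of $\mathrm{im}(\partial_0)$ near the removed crossings and the surjectivity of $\tau_0$, for which you evaluate $\bar\varepsilon_2-\bar\varepsilon_1$ on $b_3$ while the paper uses $b_1$ --- both give a unit), and close with the same Euler characteristic count. The only detail where you diverge is the claim that $\partial b_8$ hits an element of $C_{-1}$ also hit by a neighbouring generator: the paper instead observes that $\partial b_8=0$ because no disk has a positive puncture at $b_8$, which makes your ``image unchanged / no new disks'' step strictly easier and does not affect the conclusion.
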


\begin{proof}
The proof is the same as for Proposition \ref{prop:unclasp_1}, every argument adapting directly, with two minor differences: in the previous case, the generator which appeared in the image of the left crossing removed by the unclasp already appeared in the image of $\partial_0^{\bar\varepsilon_1,\bar\varepsilon_2}$, and so the image did not change after the unclasp. Here, the argument is in fact easier, as the image of $b_8$ by the differential is 0, because there are no disks with positive punctures at $b_8$, so its removal automatically does not change the image of $\partial_0^{\bar\varepsilon_1,\bar\varepsilon_2}$. This also shows that no new disks may appear after the unclasp, as the new disks which can appear are in correspondence with the disks which have a positive puncture in $b_8$. The other argument to adapt is the argument of the surjectivity of $\tau_0$: here, we have $\tau_0(b_1)=\varepsilon_2(b_1)-\varepsilon_1(b_1)=1$, which guarantees that the morphism is surjective. Every other argument directly adapts to this new case, and so we may conclude that the unclasp operation has for only effect on the homology the removal of two free degree 0 generators.
\end{proof}

By Proposition \ref{prop:unclasp_2} and the discussion which precedes it, performing the double connected sum gets rid of two degree 1 generators, and the unclasp removes two degree 0 generators. We therefore obtain the following homology: 
$$
LCH^{\varepsilon_1, \varepsilon_2}(\Lambda^T_{m,k}) = \Z[0] \oplus \Z/(m-1)\Z[1-k].
$$

We can repeat this operation by linking another $\Lambda_{m',k'}$ into what was the right eye of $\Lambda_{\rm tr}$. We then do the same double connected sum at the level of the two top cusps of $\Lambda^T_{m,k}$. This time, only the connected sum with the unknot of $\Lambda_{m',k'}$ leads to the deletion of a degree 1 generator, while the other leads to the addition of a degree 0 generator. This therefore gives us a new knot which has one $(m-1)$-torsion generator in degree $k-1$, and three free degree 0 generators. 

We do need to make sure the unclasp has the same effect as in Proposition \ref{prop:unclasp_2} even when repeating this operation. One can see that the computations of the augmentations and the proof of Proposition \ref{prop:unclasp_2} still hold, as they only depend on what happens to the right of the unclasp, and on the differential of $b_{\rm II,8}$, which takes on the role of $b_{8}$ in the following unclasp.

In fact, this is true every time we add a new $\Lambda_{m',k'}$ with $m' \neq 0, 1, 2$ to the construction, which allows us to add $(m-1)$-torsion in any degree for any $m\in\Z$, without modifying the the dimension of the other homology groups. We therefore can perform an unclasp every time which removes two degree 0 generators, and obtain a knot $\Lambda^T_{(\overline{m},\overline{k})}$, with $\overline{m} = (m_1, \ldots, m_N)$ and $\overline{k} = (k_1, \ldots, k_N)$,  which has the homology 
$$
LCH^{\varepsilon_1, \varepsilon_2}(\Lambda^T_{(\overline{m},\overline{k})}) = \Z[0] \oplus \bigoplus_{i=1}^N \Z/(m_i-1)\Z[1-k_i].
$$
See Figure \ref{fig:torsion_full}.

\begin{figure}
\labellist
\small\hair 2pt
\pinlabel {$m_{1}$} [bl] at 24 9
\pinlabel {$m_{2}$} [bl] at 112 27
\pinlabel {$m_{3}$} [bl] at 178 32
\endlabellist
  \centerline{\includegraphics[width=10cm,height=5cm]{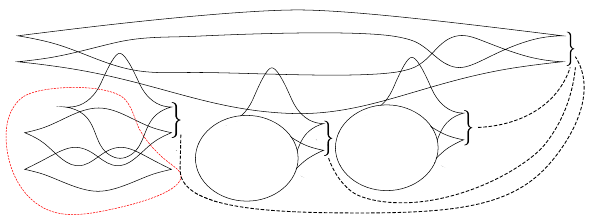}}
  \caption{$\Lambda^T_{(\overline{m},\overline{k})}$ for $\bar{m}=(m_1,m_2,m_3)$. The dotted lines represent the double connected sum and unclasp operations.}
  \label{fig:torsion_full}
\end{figure}

Notice that the case where $m=2$ or $0$ contributes no new generators, and the case where $m=1$ contributes two free generators, one in degree $k$ and one in degree $k-1$.

%%%%%%%%%%%%%%%%
 \subsection{Proof of Theorem~\ref{thm:bilinLCH}}
 %%%%%%%%%%%%%%%%

To complete the geography, we want to find a way to add free generators, in pairs (to respect the Euler characteristic of the bilinearized Legendrian homology of the knot), but not necessarily in consecutive degrees. In particular, we want to attach the $2n$-copy $\Lambda^{\rm min}_{2n}$ from the construction in Subsection \ref{subsection:2n-copy}. In this case, we choose the final connected sum in the construction of $\Lambda^{\rm min}_{2n}$ to be at the level of the two top left cusps. 

To attach $\Lambda^{\rm min}_{2n}$ to $\Lambda^T_{(\overline{m},\overline{k})}$, note that performing a double connected sum and unclasp with only these two knots would not be effective: the double connected sum would have the effect of adding two degree 0 generators (as there are no degree 1 generators to remove), and the unclasp having an effect of removing two from the Euler characteristic, we cannot hope for the desired outcome (that is, ending up with a single free degree 0 generator and then adding free generators thanks to $\Lambda^{\rm min}_{2n}$). We may however use a slightly different link to then use this method.

First, recall (Remark \ref{rmk:looks_like_tr}) that the top of of $\Lambda^{\rm min}_{2n}$ looks like a trefoil. Notice as well that the proof of Proposition \ref{prop:unclasp_2} does not ``see'' what happens below the level of the unclasp. Therefore, we can replace the trefoil of $\Lambda_{m,k}$ with $\Lambda^{\rm min}_{2n}$, and have the same effect when we attach it to $\Lambda_{\rm tr}$. More precisely, we look at the link formed of $\Lambda^{\rm min}_{2n}$ with an unknot linked with the top two strands, linked with the right eye. Adding the unknot to the link adds a free degree 1 generator to the homology. We then interlink the unknot with the left eye of $\Lambda_{\rm tr}$, as in the construction of $\Lambda^T_{(\overline{m},\overline{k})}$. Then, we do a double connected sum, at the level of the two cusps of $\Lambda_{\rm tr}$ and the cusp of the unknot and top cusp of $\Lambda^{\rm min}_{2n}$. As in the case of $\Lambda^T_{(\overline{m},\overline{k})}$, the connected sum involving the unknot removes a free degree 1 generator, whereas the other connected sum adds a free degree 0 generator. Then we do the unclasp. Since this case is analogous to the one proved in Proposition \ref{prop:unclasp_2}, this has the effect of removing two degree 0 generators. 

We have therefore attached the $2n$-copy to $\Lambda_{m,k}$ in a way that still allows us to have the minimal amount of free generators (a unique degree 0 generator), but allows us to then attach free generators in pairs of arbitrary degrees, like we did in Subsection~\ref{sec:free_geog_bilin}. See Figure \ref{fig:torsion_free_full}.

For any graded $\Z$-module $M$ with at least one free generator in degree 0 and an odd number of free generators, we can therefore construct a knot $\Lambda_M$ and two augmentations $\varepsilon_1$ and $\varepsilon_2$ constructed in such a way, such that $LCH^{\varepsilon_1, \varepsilon_2}_{\ast}(\Lambda_{M}) \cong M$. Conversely, any Legendrian knot whose DGA admits two non-dga-homotopic augmentations must be 
of this form. This proves Theorem~\ref{thm:bilinLCH}. 

\begin{figure}
\labellist
\small\hair 2pt
\pinlabel {$m$} [bl] at 180 54
\endlabellist
  \centerline{\includegraphics[width=10cm, height=5cm]{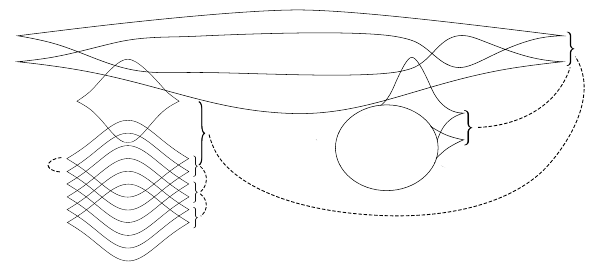}}
  \caption{Attaching $\Lambda^{\rm min}_{6}$ to $\Lambda^T_{(\overline{m},\overline{k})}$. The dotted lines represent connected sums, or double connected sums and unclasps when paired with brackets.}
  \label{fig:torsion_free_full}
\end{figure}

 \end{document}